\newcommand{\N}{\mathbb{N}}
\newcommand{\R}{\mathbb{R}}
\newcommand{\E}{\mathbb{E}}
\newcommand{\PP}{\mathbb{P}}
\newcommand{\F}{\mathcal{F}}
\newcommand{\A}{\mathcal{A}}
\newcommand{\EE}{\mathcal{E}}
\newcommand{\Vertiii}[1]{{\left\vert\kern-0.25ex\left\vert\kern-0.25ex\left\vert #1 
    \right\vert\kern-0.25ex\right\vert\kern-0.25ex\right\vert}}
\DeclareMathOperator*{\esssup}{ess.sup\,}
\DeclareMathOperator*{\essinf}{ess.inf\,}
\DeclareMathOperator*{\interior}{int\, }
\newtheorem{defn}{Definition}[section]
\newtheorem{rem}{Remark}[section]
\newtheorem{prop}{Proposition}[section]
\newtheorem{lem}{Lemma}[section]
\newtheorem{thm}{Theorem}[section]
\newtheorem{cor}{Corollary}[section]
\newtheorem{example}{Example}[section]
\providecommand{\keywords}[1]{\textbf{\textit{Keywords:}} #1}
\begin{document}

%%%%%%%%%%%%%%%%%%%%%%%%%%%%%%%%%%%%%%%%%%%%%%%%%%%%%%%%%%%%

\title{Randomized versions of Mazur lemma and  Krein–\v{S}mulian theorem}

\author{José M. Zapata \thanks{Universidad de Murcia, Dpto. Matemáticas, 30100 Espinardo, Murcia, Spain, e-mail: jmzg1@um.es}\thanks{The author is grateful to an anonymous referee for a careful review of the manuscript.}}

%\date{\today}
\maketitle

\begin{abstract}
We  extend to the framework of locally $L^0$-convex modules some results from the classical convex analysis. Namely, randomized versions of Mazur lemma and Krein-\v{S}mulian theorem under mild stability properties are provided.

\end{abstract}

\keywords{locally $L^0$-convex module; stability properties; Mazur lemma; Krein-\v{S}mulian theorem}

%Mathematics Subject Classification (2010) 46H25 · 47H40 · 52A41 · 91B30 · 91B70

%JEL Classification G3 · D5

%\keywords{representation; $L^0$-modules; Fatou property; $L^\infty$-type module; Krein-Smulian theorem.}
\section*{Introduction}
\label{intro}

Over the past seventeen years, models of risk have gained increasing importance in the literature cf.\cite{key-2,key-7,key-5,key-3,key-4}. In particular, dynamic and conditional measurement of risk play an important role in recent works cf.\cite{key-7,key-6,key-9,key-17}. Mainly, this is because, in contrast to the static case, it allows a precise and consistent measurement of the risk of financial positions over time, taking into account the arrival of new information throughout the strategy of risk management.

An outstanding approach to this type of risk measures was provided by Filipovic et al.\cite{key-8} (see also \cite{key-9}). They considered a probability space $(\Omega,\F,\PP)$ which models the market information at some time $t$ where the risk management decisions  are taken, and a module over $L^0(\mathcal{F})$ (the space of classes of equivalence of $\F$-measurable functions) which models the value of the different financial position at some time horizon $T>t$.  This gave rise to the notion of locally $L^0$-convex module, which is a generalization of the classical notion of locally convex topology.

In the literature, we can find several works in which functional analysis results have been approached by considering modules over rings of random variables. For instance, randomly normed $L^0$-modules have been used as a tool for the study of ultrapowers of Lebesgue-Bochner function spaces by R. Haydon et al.\cite{key-31}. Further, it must also be highlighted the extensive research done by T. Guo, who has widely researched theorems from functional analysis under the structure of $L^0$-modules; firstly by considering the topology of stochastic convergence with respect to the $L^0$-seminorms cf.\cite{key-10,key-32,key-22,key-35}, and later the locally $L^0$-convex topology introduced by  Filipovic et al.\cite{key-8} and the connections between both,  cf.\cite{key-15,key-12,key-34,key-36}. It is also noteworthy that Eisele and Taieb \cite{key-18}  extended some theorems from functional analysis to modules over the ring $L^\infty$. 

Working with scalars into $L^0$ instead of $\R$ implies some difficulties. For example, $L^0$ neither is a field nor is endowed with a total order, further the locally $L^0$-convex topology lacks a countable neighborhood base of $0\in L^0$. Among other  difficulties, this is why arguments that are given to prove theorems from functional analysis often fail under the structure of $L^0$-module. For this reason, many works often consider additional 'stability conditions' or 'countable concatenation properties' on either the algebraic structure or the topological structure cf.\cite{key-8,key-15,key-11}. For instance, in \cite{key-11} it is introduced the notion of \textit{relatively stable subset} (in \cite{key-11} this kind of subset is called closed under countable concatenations) in order to provide a characterization of those topological $L^0$-modules whose topology is induced by a family of $L^0$-seminorms.

The purpose of this paper is to prove some relevant results from the functional analysis in the context of locally $L^0$-convex modules.
 First, we will prove a randomized version of Mazur's lemma for $L^0$-normed modules for which the operation sum preserves the relative stability of subsets. 

Second, we provide a randomized versions of Krein-\v{S}mulian theorem. It becomes clear from \cite{key-5} and \cite{key-1}, that some of the basic theorems of representation of risk measures or risk assessments rely on Krein-\v{S}mulian theorem.  Furthermore, this classical result plays a key role in the fundamental theorem of the asset pricing (see \cite{key-20}).  Hence we believe that this result may lead to financial applications in dynamic configurations of time. 

In the literature we have some related results: an earlier version of Mazur's  can be found in \cite[Corollary 3.4]{key-35} but in the topology of stochastic convergence. A Krein-\v{S}mulian theorem for $L^\infty$-modules can be found in \cite{key-18}. Also, a Krein-\v{S}mulian theorem for conditional locally convex spaces can be found in \cite{key-21} with a different strategy-proof and strong stability properties.

This manuscript is structured as follows: Section 1 is devoted to notation and preliminaries. In Section 2, we prove the randomized version of Mazur's lemma. Section 3 is devoted to prove the randomized version of Krein-\v{S}mulian.

\section{Preliminaries and notation}
\label{sec-1}

For the convenience of the reader, let us list some notation. Let be given a probability space $\left(\Omega,\mathcal{F},\PP\right)$, and let us consider $L^{0} \left(\Omega,\mathcal{F},\PP\right)$, or simply  $L^{0}$, the set of equivalence classes of  real valued $\mathcal{F}$-measurable random variables. It is known that the triple $\left(L^{0},+,\cdot\right)$ endowed with the partial order of the almost sure dominance is a lattice ordered ring. We will follow the common practice of identifying a random variable with its equivalence class. For given $\eta,\xi\in L^0$,  we will write ``$\eta\geq \xi$`` if $\PP\left( \eta\geq\xi\right)=1$, and likewise, we will write ``$\eta>\xi$'', if $\PP\left( \eta> \xi \right)=1$. %And, given $A\in \mathcal{F}$, we will write $x>y$ (respectively,  $x \geq y$) on $A$, if $\PP\left(x>y \mid A\right)=1$ (respectively , if $\PP\left(x \geq y \mid  A \right)=1$). 
We also define $L_{+}^{0}:=\left\{ \eta\in L^{0}\:;\: \eta\geq 0\right\}$ and $L_{++}^{0}:=\left\{ \eta\in L^{0}\:;\: \eta>0  \right\}$. %and $\mathcal{F}^+:=\{A\in \mathcal{F} ; \: \PP(A)>0 \}$. 
 We will denote by $\bar{L^{0}}$, the set of equivalence classes of  $\mathcal{F}$-measurable random variables taking values in $\overline{\R}=\R\cup\{\pm\infty\}$, and the partial order of the almost sure dominance is extended to $\bar{L^{0}}$ in a natural way. Furthermore, given a subset $H\subset L^{0}$, it is well-known that $H$ owns both an infimum and a supremum in $\bar{L^{0}}$ for the order of the almost sure dominance, which will be denoted by $\essinf H$ and $\esssup H$, respectively.  

This order also allows us to define a topology. We define $B_{\varepsilon}:=\left\{ \eta\in L^{0}\:;\:\left|\eta\right|\leq\varepsilon\right\}$ the ball of radius $\varepsilon\in L_{++}^{0}$ centered at $0\in L^{0}$. Then $\left\{ \eta+B_{\varepsilon}\:;\:\eta\in L^0,\:\varepsilon\in L_{++}^{0}\right\} $ is a base for a Hausdorff topology on $L^{0}$ (see Filipovic et al.\cite{key-8}).  $L^0[|\cdot|]$ stands for $L^0$ endowed with this topology.

We denote by $\A$ the measure algebra associated to $\F$, obtained by identifying two events of $\F$ if, and only if, the symmetric difference of which is $\PP$-negligible. Again, we identify an element of $\F$ with its equivalence class of $\A$. We also define $\A^+:=\left\{A\in\A\:;\: \PP(A)>0 \right\}$.

Given a family $\{A_i\}_{i\in I}$ in $\mathcal{A}$, its supremum is denoted by $\vee_{i\in I} A_i$ and its infimum by $\wedge_{i\in I} A_i$. For given $B\in\A$, we define the set of partitions of $B$, which is given by $\Pi(B):=\{ \{A_k\}_{k\in \N} \subset \A \:;\: \vee A_k=B\textnormal{, }A_i \wedge A_j = 0, \textnormal{ for all }i\neq j\textnormal{, }i,j\in \N  \}$.

%\section{locally $L^0$-convex modules and stability properties}

Let us recall some notions of the theory of locally $L^0$-convex modules:

\begin{defn}
\label{defn: L0module}\cite[Definition 2.1]{key-8}
A topological $L^{0}$-module $E\left[\mathscr{T}\right]$ is an $L^{0}$-module $E$ endowed with a topology $\mathscr{T}$ such that 
\begin{enumerate}
\item $E\left[\mathscr{T}\right]\times E\left[\mathscr{T}\right]\longrightarrow E\left[\mathscr{T}\right],\left(x,x'\right)\mapsto x+x'$
and
\item $L^{0}\left[\left|\cdot\right|\right]\times E\left[\mathscr{T}\right]\longrightarrow E\left[\mathscr{T}\right],\left(\eta,x\right)\mapsto {\eta}x$
\end{enumerate}
are continuous with the corresponding product topologies.
\end{defn}

\begin{defn}
\label{defn: conveL0mod}
\cite[Definition 2.2]{key-8}
A topology $\mathscr{T}$ on an $L^{0}$-module $E$ is said to be locally $L^{0}$-convex 
 if there is a neighborhood base $\mathscr{U}$ of $0\in{E}$  such that each $U\in\mathscr{U}$ is
\begin{enumerate}
\item $L^{0}$-convex, i.e. ${\eta}x+{(1-\eta)}y\in U$ for all $x,y\in U$
and $\eta\in L^{0}$ with $0\leq \eta\leq 1$;
\item $L^{0}$-absorbing, i.e. for all $x\in E$ there is a $\eta\in L_{++}^{0}$
such that $x\in {\eta}U$;
\item $L^{0}$-balanced, i.e. ${\eta}x\in U$ for all $x\in U$ and $\eta\in L^{0}$
with $\left|\eta\right|\leq 1$.
\end{enumerate}
In this case, $E\left[\mathscr{T}\right]$ is called a locally $L^0$-convex module. 
\end{defn}

\begin{defn}
\label{defn: seminorm}
\cite[definition 2.3]{key-8}
A function $\left\Vert \cdot\right\Vert :E\rightarrow L_{+}^{0}$
is an $L^{0}$-seminorm on $E$ if:
\begin{enumerate}
\item $\left\Vert {\eta}x\right\Vert =\left|\eta\right|\left\Vert x\right\Vert $
for all $\eta\in L^{0}$ and $x\in E$;
\item $\left\Vert x+y\right\Vert \leq\left\Vert x\right\Vert +\left\Vert y\right\Vert$, 
for all $x,y\in E.$
\end{enumerate}
If moreover, $\left\Vert x\right\Vert=0$ implies $x=0$, 
then $\left\Vert \cdot\right\Vert $ is an $L^{0}$-norm on $E$
\end{defn}

Let $\mathscr{P}$ be a family of $L^0$-seminorms on an $L^{0}$-module
$E$. Given $F$ a finite subset of $\mathscr{P}$ and $\varepsilon\in L_{++}^{0}$, 
we define 

\[
\begin{array}{cc}
U_{F,\varepsilon}:=\left\{ x\in E\:;\:\left\Vert x\right\Vert_F \leq\varepsilon\right\}, & \textnormal{ where }\left\Vert x\right\Vert_F:=\esssup\{\Vert x\Vert \:;\: \Vert\cdot\Vert\in F \}.
\end{array}
\]

Then $\mathscr{U}:=\left\{ U_{F,\varepsilon}\:;\:\varepsilon\in L_{++}^{0},\: F\subset \mathscr{P}\: {finite} \right\} $ is a neighborhood base of $0\in E$ for some locally $L^0$-convex topology $\mathcal{T}$, which is called the topology induced by $\mathscr{P}$ (see \cite{key-8}). $E$ endowed with this topology is denoted by $E\left[\mathscr{P}\right]$.

%~\\

\begin{defn}
Given a sequence $\{x_k\}$ in an $L^0$-module $E$ and a partition $\left\{ A_k\right\}\in \Pi(\Omega)$, an element $x\in E$ is said to be a \textit{concatenation} of $\left\{ x_{k}\right\}$ along $\left\{ A_k\right\}$ if $1_{A_k}x_k=1_{A_k}x$ for all $k\in\N$.
\end{defn}

The following example shows that, for given $\{A_k\}\in \Pi(\Omega)$ and $\{x_k\}\subset E$, there can be more than one concatenation of $\{x_k\}$ along $\{A_k\}$. %As far as the authors know there is no such an similar example in the literature. 

\begin{example}

Let us take $\Omega=(0,1)$, $\mathcal{F}=\mathcal{B}(\Omega)$ the $\sigma$-algebra of Borel, $A_{k}=[\frac{1}{2^k},\frac{1}{2^{k-1}})$ with $k\in\mathbb{N}$ and $\PP=\lambda$ the Lebesgue measure. We define in $L^0(\Omega,\mathcal{F},\PP)$ the following equivalence relation
\[
x\sim y \textnormal{ if } 1_{A_k} x= 1_{A_k}y \textnormal{ for all but finitely many }k\in\N.
\]
If we denote by $\overline{x}$ the equivalence class of $x$, we can define $\overline{x}+\overline{y}:=\overline{x+y}$ and $\eta \cdot\overline{x}:=\overline{\eta x}$, obtaining that $E:=L^0/\sim$ is an $L^0$-module.

Then, for $x\in L^0$, we have that $1_{A_k}\overline{x}=\overline{1_{A_k} x}=\overline{0}$. Hence, any element of $E$ is a  concatenation of $\{0\}_k$ along $\{A_k\}_k$.   
\end{example}   

For a given partition $\{A_k\}\in \Pi(\Omega)$ and a sequence $\{x_k\}\subset E$, if there exists an unique concatenation of $\{x_k\}$ along $\{A_k\}$ it will be denoted by the formal sum $\sum 1_{A_k} x_k$.

Now, we will collect some stability notions for $L^0$-modules, more of them contained  —under different names— in the existing literature, cf.\cite{key-8,key-15,key-11}.

\begin{defn}
Let $E$ be an $L^0$-module, then:
\begin{enumerate}

\item $K\subset E$ is said to be \textit{stable} (with uniqueness), if for each sequence $\left\{ x_{k}\right\}$ in $K$ and each partition $\left\{ A_{k}\right\}\in \Pi(\Omega)$, it holds that there exists (an unique) concatenation $x\in E$ of $\{x_k\}$ along $\{A_k\}$.

\item  $K$ is said to be \textit{relatively stable} (with uniqueness) if every concatenation $x$ of a sequence $\{x_k\}\subset K$ along a partition $\{A_k\}\in\Pi(\Omega)$ is again in $K$ (and any other concatenation of $\{x_k\}$ along $\{A_k\}$ equals $x$).
\end{enumerate}
\end{defn}

\begin{rem}
Note that the property of being relatively stable is exactly the same than the property of being \textit{closed under countable concatenations} introduced in \cite{key-11}; we only change the name of the property. In the property of being relatively stable we do not ask the existence of the concatenation, we only ask that if it exists then, it belongs to $K$. Clearly, if $K\subset E$ is stable, it is also relatively stable.   

Notice that every $L^0$-module $E$ is always relatively stable; however, we can find examples of $L^0$-modules which are not stable (see Example down below). In particular, we find that being relatively stable is a strictly weaker property than being stable. 

Inspection shows that every relatively stable subset $K$ of a stable $L^0$-module $E$ is also stable.
\end{rem}

\begin{example}
\cite{key-8}
\label{ex: ccp}
Consider the probability space $\Omega = [0, 1]$, $\mathcal{F} = \mathcal{B}[0, 1]$ the Borel $\sigma$-algebra and
$\PP:=\lambda$ the Lebesgue measure on $[0, 1]$ and define

\[
E := span_{L^0} \left\{1_{[1−\frac{1}{2^{n−1}},1−\frac{1}{2^n}]}\: ; \: n \in \N\right\}.
\]
$E$ is an $L^0$-module that is not stable, but it is relatively stable.
\end{example}

We have the following theorem, which characterizes the locally $L^0$-convex topologies that are induced by a family of $L^0$-seminorms.  

\begin{thm}
\label{thm: caracterizacion}
\cite[Theorem 2.6]{key-11}
Let $E\left[\mathscr{T}\right]$ be a topological $L^{0}$-module. Then $\mathscr{T}$ is induced by a family of $L^0$-seminorms if, and only if, there is a neighborhood base of $0\in{E}$ for which each $U\in\mathscr{U}$ is $L^{0}$-convex, $L^{0}$-absorbing, $L^{0}$-balanced and relatively stable.
\end{thm}

Further, Zapata \cite{key-11}, and independently Wu and Guo \cite{key-19}, provided an example showing that in the above theorem, the property of being relatively stable cannot be dropped (see \cite[Example 2.1]{key-11}).

Regarding again the uniqueness for concatenations of a sequence along a partition, we have the following result:

\begin{prop}
Let $E[\mathscr{T}]$ be a locally $L^0$-convex module, and suppose that $\mathscr{T}$ is induced by a family of $L^0$-seminorms and is Hausdorff. 
Then for given $\{A_k\}\in \Pi(\Omega)$ and $\{x_k\}\subset E$, if there exists a concatenation of $\{x_k\}$ along $\{A_k\}$, then there is exactly one of them.    
\end{prop}

\begin{proof}

Suppose that $\mathscr{U}$ is a neighborhood base of $0\in E$ as in Theorem \ref{thm: caracterizacion}. Since $\mathscr{T}$ is Hausdorff, by just following the argument of the classical case, we have that $\bigcap\mathscr{U}=\{0\}$. 

So, for given $x,y\in E$ concatenations of $\{x_k\}$ along $\{A_k\}$, it is clear that $x-y$ is a concatenation of the sequence $0,0,...$ along $\{A_k\}$. Thus, since each $U\in\mathscr{U}$ is relatively stable, it holds that $x-y\in U$ for all $U\in\mathscr{U}$. So, we conclude that $x=y$.  
\end{proof}

The \textit{stable hull} of a family of $L^0$-seminorms was introduced in \cite{key-8}. Since in \cite{key-8} families of $L^0$-seminorms are supposed to be invariant under finite suprema, we adopt the definition suggested by Guo et al.\cite{key-34}: 

\begin{defn}
Let $\mathscr{P}$ be a family of $L^0$-seminorms on an $L^{0}$-module $E$. We define the \textsl{stable} hull of $\mathcal{P}$ as follows:
\[
\begin{array}{cc}
\mathscr{P}_{cc}:=\left\{ \underset{k\in\N}\sum 1_{A_k}\Vert \cdot\Vert_{F_k} \:;\:{A_k}\in\Pi(\Omega),\: F_k\subset \mathcal{P} \textnormal{ finite} \right\}, & \textnormal{ with }\Vert x\Vert_{F_k}:=\esssup \left\{\Vert x\Vert  \:;\: \Vert\cdot\Vert\in F_k \right\}.
\end{array}
\]
\end{defn}

Filipovic et al. \cite{key-8} introduced the topological dual of a topological  $L^0$-module $E[\mathscr{T}]$, which is denoted by 
\[
E[\mathscr{T}]^*=E^*=\left\{ \mu :E\rightarrow L^0\:;\:\mu\textnormal{ is }L^0\textnormal{-linear and continuous}\right\}. 
\]

We can also consider the weak and weak-$*$ topologies. Indeed, let us consider the family of $L^0$-seminorms $\sigma(E,E^*)=\{ q_{x^*}\}_{x^*\in E^*}$ defined by $q_{x^*}(x):=|x^*(x)|$ for $x\in E$. Analogously, we have the weak-$*$ topology. 

Then, we can consider the stable hull $\sigma(E,E^*)_{cc}$, which induces a topology finer than the weak topology. Likewise, $\sigma(E^*,E)_{cc}$ induces a topology finer than the weak-$*$ topology.   

The topologies induced by  $\sigma(E,E^*)_{cc}$ and $\sigma(E^*,E)_{cc}$, will be referred to as the stable weak topology and the stable weak-$*$ topology, respectively.

The following example shows that both topologies are not necessarily the same.

\begin{example}
\cite[Example 1.2]{key-50}
\label{ex: weakTop}

Filipovic et al. \cite{key-8} introduced the following locally $L^0$-convex modules, which are called $L^p$-type modules. Namely, let $(\Omega,\EE,\PP)$ a probability space such that $\F$ is a sub-$\sigma$-algebra of $\EE$ and $p\in [1,+\infty]$. Then we can define the $L^0$-module  
$L^p_\F(\EE):=L^0(\F) L^p(\EE)$, for which
\[
\left\Vert x | \mathcal{F} \right\Vert _{p}:=\begin{cases}
\E_\PP\left[\left|x\right|^{p}|\mathcal{F}\right]^{1/p} & \textnormal{ if } p<\infty\\
\essinf\left\{ y\in\bar{L}^{0}\left(\mathcal{F}\right)\:;\: y\geq\left|x\right|\right\}  & \textnormal{ if } p=\infty
\end{cases}
\]
defines an $L^0$-norm.

Besides, it is known that for $1\leq p<+\infty$, if $1<q\leq+\infty$ with $1/p+1/q=1$, the map $T:L^p_\F(\EE)\rightarrow L^q_\F(\EE)$, $y\mapsto T_y$ defined by $T_y(x):=\E_\PP[x y|\F]$ is an $L^0$-isometric isomorphism (see \cite[Theorem 4.5]{key-15}).

The weak topologies are defined, and the family of sets 
\[
U_{F,\varepsilon}:=\left\{x\in L^p_\F(\EE)\:;\: |\E_\PP[x y|\F]|\leq\varepsilon,\:\forall y\in F\right\},
\]
where $F$ is a finite subset of $L^q_\F(\EE)$ and $\varepsilon\in L^0_{++}(\F)$, constitutes a neighborhood base of $0\in L^p_\F(\EE)$ for the weak topology.

Let us consider the particular case:  $\Omega=(0,1)$, $\mathcal{E}=\mathcal{B}(\Omega)$ the $\sigma$-algebra of Borel, $A_k=[\frac{1}{2^k},\frac{1}{2^{k-1}})$ with $k\in\mathbb{N}$, $\F:=\sigma(\{A_k\:;\:k\in\N\})$ and $\PP$ the Lebesgue measure. 

Also, for each $k\in\N$, let us define $L^2_k:=L^2(A_k)$ considering the trace of $\F$ in $A_k$ and the conditional probability $\PP(\cdot|A_k)$. For each $x\in L^2_k$ we denote $\Vert x|A_k\Vert_2:=\E_\PP[x^2|A_k]^{1/2}$.

In this case, $L^0(\F)=\{ \sum_{k\in\N}1_{A_k}r_k\:;\: r_k\in\R\}$, and inspection shows $L^2_\F(\EE)=\{\sum_{k\in\N}1_{A_k}x_k\:;\: x_k|_{A_k}\in L^2_k\}$ and, for $x\in L^2_\F(\EE)$, we have $\Vert x|\F\Vert_2=\sum_{k\in\N}1_{A_k}\Vert x|A_k\Vert_2$.

For each $k$, let us choose a countable set $\{y^k_n\:;\:n\in\N\}$ with $y^k_n|_{A_k}=z^k_n$ where $\{z^k_n\}$ is a linearly  independent subset of $L^2_k$.

Let
\[
U:=\sum_{k\in\N} 1_{A_k} U_{\{y^k_1,...,y^k_k\},1}.
\]
We have that $U$ is a neighborhood of $0\in L^2_\F(\EE)$ for the topology $\sigma(L^2_\F(\EE),L^2_\F(\EE))_{cc}$, but not for the topology $\sigma(L^2_\F(\EE),L^2_\F(\EE))$. Indeed, by way of contradiction, let us assume the there is a finite subset $F$ of $L^2_\mathcal{F}(\mathcal{E})$ and $\varepsilon\in L^0_{++}(\mathcal{F})$, where $\varepsilon=\sum_{k\in\N} 1_{A_k}r_k$ with $r_k\in\R^+$, so that $U_{F,\varepsilon}\subset U$. Now, let us take $k:=\#F + 1$. 

%Considering the random variables restricted to $A_k$ and considering the trace of $\F$ in $A_k$, we can suppose $\Omega=A_k$. Then

Let us define $F_k:=\{y|_{A_k}\:;\: y\in F\}$, then
\[
\begin{array}{cc}
U_{F_k,r_k}\subset U_{\{z^k_1,...,z^k_k\},1} & \textnormal{ in }L^2_k.
\end{array}
\]
For each $y\in L^2_k$, let us denote $\mu_y(x):=\E_{\PP}[x y|A_k]$. Then, it follows that
\[
\bigcap_{y\in F} \ker (\mu_y) \subset \bigcap_{i=1}^k \ker (\mu_{y^k_i}).
\]
But this is impossible, because $\bigcap_{y\in F} \ker (\mu_y)$ is a vectorial subspace of $L^2_k$ with codimension less than $k$, included into a vectorial subspace with codimension $k$. 
\end{example}

\section{A randomized version of Mazur lemma}
\label{sec-2}

A well-known result of classical convex analysis is the Mazur lemma, which shows that for any weakly convergent sequence in a Banach space there is a sequence of convex combinations of its members that converges strongly to the same limit. We  will generalize this tool to nets in an $L^0$-normed module. %For this purpose, we will consider not only $L^0$-convex combinations, but also countable concatenations of members of a given net.

%finding that it is not sufficient to take the convex hull, but it is necessary to take the countable concatenation closure.

Before proving this result, we need to recall the gauge function for $L^0$-modules, which was introduced in \cite[Definition 2.21]{key-8}:

Let $E$ be an $L^0$-module. The gauge function $p_{K}:E\rightarrow\bar{L}_{+}^{0}$ of
a set $K\subset E$ is defined by 

\[
p_{K}\left(x\right):=\essinf\left\{ \eta\in L_{+}^{0}\:;\: x\in {\eta}K\right\} .
\]

In addition,  among other properties, if $K$ is $L^{0}$-convex and $L^{0}$-absorbing then, the essential infimum above can be defined by taking $\eta\in L^0_{++}$, also $p_K$ is subadditive and $\eta p_K(x)=p_K(\eta x)$ for all $x\in E$ and $\eta\in L^0_+$. If, moreover, $K$ is $L^0$-balanced, then $p_{K}$ is an $L^0$-seminorm (see \cite[Proposition 2.23]{key-8}).

Given that $L^0$ is not a totally ordered set, the following result will be useful in some situations:  

\begin{lem}
\label{prop: cccdownwardsDirected}
Let $C\subset L^0$ be bounded below (resp. above) and relatively stable, then for each $\varepsilon\in L^0_{++}$ there exists $\eta_\varepsilon \in C$ such that 
\[
\essinf C \leq \eta_\varepsilon < \essinf C + \varepsilon
\]
 (resp., $\esssup C \geq \eta_\varepsilon > \esssup C - \varepsilon$) 

In particular, given an $L^0$-module $E$ and $K\subset E$, which is $L^0$-convex, $L^0$-absorbing and relatively stable, we have that, for $\varepsilon\in L^0_{++}$, there exists $\eta_\varepsilon \in L^0_{++}$ with $x \in {\eta_\varepsilon}K$ such that $p_K(x)\leq \eta_\varepsilon < p_K(x)+\varepsilon$. 
%\[
%p_K(X)\leq Y_\varepsilon < p_K(X)+\varepsilon
%\] 
\end{lem}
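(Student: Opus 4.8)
The plan is to prove the infimum statement; the supremum case follows by the symmetric argument (or by applying the infimum case to $-C$). First I would invoke the standard structure of the essential infimum recalled in \cite[A.5]{key-1}: there is a countable subfamily $\{Z_n\}_{n\in\N}\subset C$ such that $\essinf C=\inf_n Z_n$ almost surely, the right-hand side being the pointwise infimum. This is the only classical input, and it is where the measurability is handled. Note that I do \emph{not} need $C$ to be downward directed; this is exactly the point of the lemma, since the relative countable concatenation property will take over the role usually played by directedness.

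Second, fix $\varepsilon\in L_{++}^{0}$. Because $\inf_n Z_n=\essinf C<\essinf C+\varepsilon$ pointwise a.s., for almost every $\omega$ there is some $n$ with $Z_n(\omega)<(\essinf C)(\omega)+\varepsilon(\omega)$; hence the sets $\{Z_n<\essinf C+\varepsilon\}$ cover $\Omega$ up to a null set. I would then disjointify them, setting $A_1:=\{Z_1<\essinf C+\varepsilon\}$ and $A_n:=\{Z_n<\essinf C+\varepsilon\}\setminus\bigcup_{k<n}A_k$, so that $\{A_n\}_{n\in\N}\in\Pi(\Omega,\mathcal{F})$ and $Z_n<\essinf C+\varepsilon$ on $A_n$ for every $n$.

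Third, I would form $Y_\varepsilon:=\sum_{n\in\N}1_{A_n}Z_n$. By the standing uniqueness assumption this concatenation is well defined, and since $\{Z_n\}_{n\in\N}\subset C$ and $C$ has the relative countable concatenation property, $Y_\varepsilon\in C$; in particular $Y_\varepsilon\geq\essinf C$. On the other hand $Y_\varepsilon=Z_n<\essinf C+\varepsilon$ on each $A_n$, and as $\{A_n\}_{n\in\N}$ is a partition this gives $Y_\varepsilon<\essinf C+\varepsilon$ everywhere, yielding the claimed inequalities. The substantive step here is recognizing that the relative countable concatenation property is precisely what lets us patch the countably many approximants $Z_n$ into a single element of $C$ with uniform control, rather than control merely on a large set.

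For the ``in particular'' statement I would set $C:=\{Y\in L_{++}^{0}\,;\,X\in Y K\}$, so that $p_K(X)=\essinf C$ by the recalled properties of the gauge of an $L^0$-convex, $L^0$-absorbent $K$. The step I expect to be the most delicate is verifying that $C$ inherits the relative countable concatenation property from $K$: given $\{Y_n\}_{n\in\N}\subset C$ and a partition $\{A_n\}_{n\in\N}$, I would choose witnesses $W_n\in K$ with $X=Y_n W_n$, form $W:=\sum_{n\in\N}1_{A_n}W_n\in K$ using the relative countable concatenation property of $K$, and check by multiplying through by each $1_{A_n}$ that $1_{A_n}\bigl[(\sum_{m\in\N}1_{A_m}Y_m)\,W\bigr]=1_{A_n}Y_nW_n=1_{A_n}X$, whence $(\sum_{n\in\N}1_{A_n}Y_n)\,W=X$ by uniqueness of concatenations, so that $\sum_{n\in\N}1_{A_n}Y_n\in C$. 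Since $C\subset L_{++}^{0}$ is bounded below, applying the first part produces $Y_\varepsilon\in C\subset L_{++}^{0}$ with $X\in Y_\varepsilon K$ and $p_K(X)\leq Y_\varepsilon<p_K(X)+\varepsilon$, which completes the proof.
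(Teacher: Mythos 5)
Your proof is correct and rests on the same key mechanism as the paper's: disjointify the sets where countably many members of $C$ fall below $\essinf C+\varepsilon$, patch them into a single element via a countable concatenation, and invoke the relative countable concatenation property of $C$ to conclude membership. The one structural difference is how you obtain the countable approximating family: the paper first shows that the relative countable concatenation property forces $C$ to be downwards directed (via the two-set partition $\{(Y<Y'),(Y<Y')^c\}$) and then extracts a decreasing sequence converging a.s.\ to $\essinf C$, whereas you invoke the general fact that the essential infimum of an arbitrary family is already the pointwise infimum of a countable subfamily. Both are legitimate; your route is slightly more economical and makes explicit that directedness is not needed, while the paper's route isolates directedness as a reusable consequence of the relative countable concatenation property. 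One small imprecision in your ``in particular'' step: the relative countable concatenation property of $K$ gives \emph{membership} of a concatenation in $K$, not its \emph{existence} in $E$, so writing ``form $W:=\sum_n 1_{A_n}W_n\in K$ using the relative countable concatenation property'' conflates the two. It is harmless here because $Y_n\in L^0_{++}$ is invertible, so the witness is forced to be $W_n=(1/Y_n)X$ and the concatenation exists in $E$ as $W=(1/Y)X$ (this is exactly how the paper phrases it, via $X/Y\in cc(\{A_k\}_k,\{X/Y_k\}_k)$); you should state that identification explicitly before appealing to the relative countable concatenation property.
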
  
\begin{proof}

Firstly, let us see that $C$ is downwards directed. Indeed, for given $\eta,\xi\in C$, define $A:=(\eta<\xi)$. Since $C$ is relatively stable, we have that $1_A \eta + 1_{A^c} \xi=\eta\wedge\xi \in C$. 

Therefore,  for $\varepsilon \in L^0_{++}$, there exists a decreasing sequence $\left\{\eta_k\right\}$ in $C$ converging  to $\essinf C$ almost surely. 

Let us consider the sequence in $\A$
\[
A_0:=\emptyset \textnormal{ and } A_k:=(\eta_k<\essinf C +\varepsilon)-A_{k-1} \textnormal{ for }k>0. 
\]
Then $\left\{A_k\right\}_{k\geq 0}\in \Pi(\Omega)$, and we can define $\eta_\varepsilon:=\sum_{k\geq 0}{1_{A_k}}\eta_k$. Given that $C$  is relatively stable, it follows that $\eta_\varepsilon \in C$.
%\[
%Y_\varepsilon:=\sum_{k\geq 0}{1_{A_k}}Y_k.
%\]
%Thus $Y_\varepsilon \in C$, since $C$ is stable under countable concatenations.

For the second part, it suffices to see that if $K$ is relatively stable, then $\left\{\eta\in L^0_{++}\:; \: x\in {\eta}K  \right\}$ is relatively stable as well. Indeed, given $\left\{A_k\right\}\in \Pi(\Omega)$ and $\left\{\eta_k\right\} \subset L^0_{++}$ such that $x\in {\eta_k}K$ for each $k\in\N$, let us take $\eta:={\sum}_{n\in\mathbb{N}}\eta_{n}1_{A_{n}}\in L_{++}^{0}$. Then we have that $x/\eta$ is a concatenation of $\left\{x/\eta_{k}\right\}$ along $\{A_{k}\}$. Since $x/\eta_{k}\in K$ for all $k\in\N$ and $K$ is relatively stable, we conclude that  $x/\eta\in K$, and the proof is complete.
%Therefore $X/Y\in K$ as $K$ is stable under countable concatenations, and the proof is complete.    
\end{proof}

\begin{rem}
Note that, in the latter result, we have considered $K$ to be relatively stable, rather than the stronger condition of being stable. This is because $\eta$, and therefore $x/\eta$, is constructed by using the stability of $L^0$ instead of some stability in $E$. Once $x/\eta$ exists, we just need to ensure that it belongs to $K$. But that is the case due to the relative stability of $K$. The same strategy was used to prove Proposition 2.3 of \cite{key-11}.
\end{rem}

Before stating the main result, we need to introduce some notions and prove some preliminary results:

\begin{defn}
Given $A\subset E$, we define: 
\begin{itemize}
	\item 

the $L^0$-\textit{convex hull} of $A$: 
\[
co_{L^0}(A):=\left\{\underset{i\in I}{\sum}{\eta_i x_i} \:; \: I \textnormal{ finite, }x_i \in A \textnormal{, } \eta_i \in L^0_+\textnormal{, }\underset{i\in I}{\sum}{\eta_i}=1 \right\}
\]
\item the \textit{stable hull} of $A$:
\[
\overline{A}^{cc}:=\left\{x\in E\:;\: \exists\{x_k\}\subset A,\:\{A_k\}\in \Pi(\Omega)\textnormal{ such that }\:x\textnormal{ is a concatenation of }\{x_k\}\textnormal{ along }\{A_k\}\right\}.
\]
\end{itemize}
\end{defn}

\begin{rem}
In many works the stable hull of a subset $K$ is defined assuming that $E$ is stable cf.\cite{key-15,key-12}. Notice that the definition provided here does not require such an assumption. 

Also note that $\overline{K}^{cc}$ is the least relatively stable set containing $K$. In particular, $K$ is relatively stable if, and only if, $\overline{K}^{cc}=K$. 

\end{rem}

%Now let us see the following proposition

\begin{prop}
\label{closureGauge}
Let $E\left[\mathscr{T}\right]$ be a topological $L^0$-module and 
let $C\subset E$ be $L^{0}$-convex and $L^{0}$-absorbing.  Then the following are equivalent: 
\begin{enumerate}
	\item $p_C:E\rightarrow L^0$ is continuous. 
	\item $0\in \interior{C}$.
\end{enumerate}

In this case, if in addition $C$ is relatively stable
\[
\overline{C}=\left\{x \in E \:; \: p_C(x)\leq 1  \right\},
\]
where $\overline{C}$ denotes the closure of $C$ with respect to $\mathscr{T}$.
\end{prop}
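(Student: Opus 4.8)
The plan is to reduce everything to three properties of the gauge $p_C$, the first two of which are recorded in \cite[Proposition 2.23]{key-8} and the third of which is exactly where concatenation will be decisive. First I would note that $L^0$-absorbency applied to $0$ forces $0\in C$, and that $C\subseteq\{X\in E;\,p_C(X)\leq 1\}$ trivially, since $X\in C=1\cdot C$ gives $p_C(X)\leq 1$. The delicate inclusion is $\{X\in E;\,p_C(X)<1\}\subseteq C$: given $p_C(X)<1$, I would apply the second part of Lemma \ref{prop: cccdownwardsDirected} with $\varepsilon:=(1-p_C(X))/2\in L^0_{++}$ to produce $Y_\varepsilon\in L^0_{++}$ with $X\in Y_\varepsilon C$ and $Y_\varepsilon<(1+p_C(X))/2<1$; writing $X=Y_\varepsilon c$ with $c\in C$ and using the decomposition $X=Y_\varepsilon c+(1-Y_\varepsilon)0$ together with $0\in C$ and $0\leq Y_\varepsilon\leq 1$, $L^0$-convexity yields $X\in C$.

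For $(2)\Rightarrow(1)$ I would establish continuity of $p_C$ at an arbitrary $X$ through the subadditive estimate $|p_C(X')-p_C(X)|\leq\max\{p_C(X'-X),\,p_C(X-X')\}$. Fix $\delta\in L^0_{++}$. Since multiplication by $\delta$ is a homeomorphism of $E[\tau]$, the set $\delta C$ is a neighborhood of $0$; choosing a neighborhood $W_0\subseteq\delta C$ and passing to the symmetric one $W:=W_0\cap(-W_0)$, every $Z\in W$ satisfies $Z,-Z\in\delta C$, whence $p_C(Z)\leq\delta$ and $p_C(-Z)\leq\delta$. Thus $|p_C(X')-p_C(X)|\leq\delta$ for all $X'\in X+W$, which proves continuity.

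For $(1)\Rightarrow(2)$ I would use that $\{Y\in L^0;\,Y<1\}$ is open in $L^0$ (for $Y_0<1$ the ball of radius $(1-Y_0)/2$ stays below $1$), so that continuity of $p_C$ makes $U:=\{X\in E;\,p_C(X)<1\}$ open; it contains $0$ because $p_C(0)=0$, and by the key inclusion above it satisfies $U\subseteq C$, giving $0\in\interior C$.

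Finally, assuming in addition that $C$ has the relative countable concatenation property, I would prove $\overline{C}=\{X\in E;\,p_C(X)\leq 1\}$ in two steps. As (1) holds, $p_C$ is continuous, so $\{p_C\leq 1\}$ is closed and contains $C$, giving $\overline{C}\subseteq\{p_C\leq 1\}$. Conversely, given $p_C(X)\leq 1$, I would approximate $X$ from inside $C$ along the real scalars $\lambda_n:=1-\tfrac{1}{n}$: positive homogeneity gives $p_C(\lambda_n X)=\lambda_n p_C(X)\leq\lambda_n<1$, so $\lambda_n X\in C$ by the key inclusion, while continuity of scalar multiplication gives $\lambda_n X\to X$; hence $X\in\overline{C}$. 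The main obstacle throughout is precisely the inclusion $\{p_C<1\}\subseteq C$: in the $L^0$-setting the family $\{Y\in L^0_{++};\,X\in YC\}$ need not be downwards directed, so its essential infimum $p_C(X)$ need not be attained, nor even approximated, by any single admissible scalar lying below $1$, and it is exactly the relative countable concatenation property, through Lemma \ref{prop: cccdownwardsDirected}, that restores this approximation and makes the inclusion valid.
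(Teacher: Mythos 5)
Your overall architecture is sound and close to the paper's: both arguments hinge on Lemma \ref{prop: cccdownwardsDirected} to replace the essential infimum defining $p_C(X)$ by a single admissible scalar. But one step in your proof of the inclusion $\left\{X;\,p_C(X)\leq 1\right\}\subseteq\overline{C}$ genuinely fails. You approximate $X$ by $\lambda_n X$ with the real constants $\lambda_n=1-\tfrac{1}{n}$ and invoke continuity of scalar multiplication to get $\lambda_n X\to X$. In the topology used here, however, the scalar ring is $L^0\left[\left|\cdot\right|\right]$ with neighbourhood base $\{B_\varepsilon;\,\varepsilon\in L^0_{++}\}$, and the constants $1-\tfrac{1}{n}$ do \emph{not} converge to $1$: for $\varepsilon\in L^0_{++}$ with $\essinf\varepsilon=0$ there is no $n$ with $\tfrac{1}{n}\leq\varepsilon$ almost surely. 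Consequently $\lambda_n X\to X$ cannot be deduced (and is false in general, e.g.\ for $E=L^0$ and $X=1$). This is exactly why the paper runs the approximation along the net $\left\{X/Y_\varepsilon\right\}_{\varepsilon\in L^0_{++}}$, with $Y_\varepsilon$ supplied by Lemma \ref{prop: cccdownwardsDirected} satisfying $X\in Y_\varepsilon C$ and $Y_\varepsilon<p_C(X)+\varepsilon\leq 1+\varepsilon$: the approximation error is then controlled by the $L^0_{++}$-valued $\varepsilon$ itself, and the net does converge to $X$. Your argument is repaired by replacing $1-\tfrac{1}{n}$ with, say, $(1+\varepsilon)^{-1}$ for $\varepsilon\in L^0_{++}$ (downward directed), after which your route through $\{p_C<1\}\subseteq C$ goes through.

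A second, smaller issue is a hypothesis mismatch. Your proof of $(1)\Rightarrow(2)$ also passes through $\{p_C<1\}\subseteq C$, hence through the second part of Lemma \ref{prop: cccdownwardsDirected}, which requires $C$ to have the relative countable concatenation property; but the proposition asserts the equivalence for $C$ merely $L^0$-convex and $L^0$-absorbent, reserving that property for the closure formula alone. As written, your argument therefore proves the equivalence only under the extra hypothesis. (The paper's own ``exactly the same as the real case'' glosses over precisely this point, so it is worth being explicit about where the property is used.) Finally, your closing diagnosis is slightly off: for an $L^0$-convex $C$ the family $\{Y\in L^0_{++};\,X\in YC\}$ \emph{is} downward directed --- if $X=Y_1c_1=Y_2c_2$ and $A=(Y_1\leq Y_2)$, then $X/(Y_1\wedge Y_2)=1_Ac_1+(1-1_A)c_2\in C$ --- what the relative countable concatenation property buys is the passage from a decreasing sequence converging to the essential infimum to a single element within $\varepsilon$ of it, which requires patching countably many members together. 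Your $(2)\Rightarrow(1)$ and the inclusion $\overline{C}\subseteq\{p_C\leq1\}$ are fine.
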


\begin{proof}

The proof is exactly the same as the real case.

For the equality.

``$\subseteq$'': It is obtained from continuity of $p_C$.

``$\supseteq$'': Let $x\in E$ be satisfying $p_C(x)\leq 1$. By Lemma \ref{prop: cccdownwardsDirected}, we have that for every $\varepsilon \in L^0_{++}$ there exists $\eta_\varepsilon \in L^0_{++}$ such that $1 \leq \eta_\varepsilon < 1+\varepsilon,$ with $x \in \eta_\varepsilon C$.

Then, $\left\{x/\eta_\varepsilon \right\}_{\varepsilon \in L^0_{++}}$ is a net (viewing $L^0_{++}$ downward directed) in $C$ converging to $x$ and therefore $x\in \overline{C}$. So, the proof is complete.
\end{proof}

The example below (drawn from \cite{key-11}) shows that, for the equality proved in the latter proposition, $C$ is required to be relatively stable.
\begin{example}
Consider the probability space  $\Omega=(0,1)$, $\mathcal{E}=\mathcal{B}(\Omega)$ the $\sigma$-algebra of Borel, $A_{n}=[\frac{1}{2^n},\frac{1}{2^{n-1}})$ with $n\in\mathbb{N}$, $\PP:=\lambda$ the Lebesgue measure and $E:=L^0(\mathcal{E})$ endowed with $|\cdot|$.

We define the set 

\[
U:=\left\{ \eta\in L^{0}\:;\:\exists\, I\subset\mathbb{N}\textnormal{ finite, }\left|\eta 1_{A_{i}}\right|\leq 1\:\forall\, i\in\mathbb{N}-I\right\}.
\]

Inspection shows that $U$ is
$L^{0}$-convex, $L^{0}$-absorbing and $\overline{U}=U$.

But, $p_{U}\left(x\right)=0$ for all $x\in{L^0}$, and so $\left\{ x\:;\: p_{U}(x)\leq 1\right\} = E$. Indeed, it suffices to show that $p_{U_1}\left(1\right)=0$, since $p_{U_1}$ is an $L^0$-seminorm. By way of contradiction, assume that $p_{U_1}\left(1\right)>0$. Then, there exists $m\in\N$ 
such that $\PP\left[\left(p_{U_1}\left(1\right)>0\right)\cap{A_m}\right]>0$.
Define $A:=\left(p_{U_1}\left(1\right)>0\right)\cap{A_m}$, $\nu:=\frac{1}{2 }(p_{U_1\left(1\right)}+1_{A^c})$, $\eta:=1_{A^c}+\nu{1_A}$ and $x:=1_{A^c}+\frac{1}{\nu}{1_A}$. 
Thus, we have $1=\eta{x}\in{\eta U_1}$ and $\PP\left(p_{U_1}\left(1\right)>\eta\right)>0$, a contradiction. 
%
%Therefore
%\[
%\left\{ X\in E;\: p_{U}(X)\leq 1\right\} = E.
%\]

\end{example}

We need a new notion:

\begin{defn}
Let $E$ be an $L^0$-module, we say that the sum of $E$ preserves the relative stability, if 
\[
L,M\subset E \textnormal{ are relatively stable implies that }L+M\textnormal{ is relatively stable.}
\]
\end{defn}

The following example shows that, in general, the sum of two relatively stable subsets is not necessarily relatively stable.

\begin{example}
\label{exam: counterExSPrccp}
Let us take $\Omega=(0,1)$, $A_{k}=[\frac{1}{2^k},\frac{1}{2^{k-1}})$ for each $k\in\mathbb{N}$, $\mathcal{F}=\sigma(\{A_k\:;\: k\in\N\})$ the sigma-algebra generated by $\{A_k\}_k$  and $\PP$ the Lebesgue measure restricted to $\mathcal{F}$. Then $(\Omega,\F,\PP)$ is a probability space. Let us put $L^0:=L^0(\Omega,\F,\PP)$.

Let us define the following $L^0$-module, which is an $L^0$-submodule of $L^0(\R^2,\F)$:
\[
E:=span_{L^0}\{(1,0),(0,1_{A_k}) \:;\: k\in\N\}. 
\]

Let us also consider the following subsets:
\[
L:=\overline{\{ (0,-1_{A_k}) \:;\: k\in\N\}}^{cc}
\]
\[
M:=\overline{\{ (1_{A_k}, 1_{A_k})  \:;\: k\in\N\}}^{cc}, 
\]
which are obviously relatively stable.

We also have $(1,0)\in E$ and 
\[
1_{A_k} (1,0)=1_{A_k}((0,-1_{A_k})+(1_{A_k},1_{A_k}))\in 1_{A_k}(L+M), 
\]
and therefore $(1,0)\in\overline{L+M}^{cc}.$

However, inspection shows that $(1,0)\notin L+M$. We conclude that $L+M$ is not relatively stable. 

\end{example}

The following result is easy to prove by inspection, we omit the proof:

\begin{prop}
\label{prop: sickModules}
Let $E$ be an $L^0$-module. If $E$ is stable, then its sum preserves the relative stability.
\end{prop}

In virtue of the above result we have that, for an $L^0$-module $E$, the property of being stable is stronger than the property of $E$ having sum which preserves the relative stability. The following result shows that, in fact, the former is strictly stronger than the latter.    

\begin{prop}
\label{prop: counterExSPrccp}
There exists an $L^0$-module $E$ which is not stable, but whose sum preserves the relative stability. 
\end{prop}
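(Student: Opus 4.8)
The plan is to produce an explicit module and check the two properties directly. I would work over the purely atomic probability space of Example \ref{exam: counterExSPrccp}: $\Omega=(0,1)$, $A_{k}=[2^{-k},2^{-(k-1)})$, $\F=\sigma(\{A_k\}_k)$ and $\PP$ the Lebesgue measure. Since $\F$ is generated by the atoms $A_k$, each class $X\in L^0$ is constant on every $A_k$, so $L^0$ is identified with the sequence space $\R^\N$ via $X\mapsto (X1_{A_k})_k$. I then set
\[
E:=span_{L^0}\{1_{A_k}\,;\,k\in\N\},
\]
as in Example \ref{ex: ccp}; under the identification above $E$ is exactly the submodule $c_{00}$ of finitely supported sequences. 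That $E$ lacks the countable concatenation property is immediate: each $1_{A_k}$ lies in $E$, but the only possible concatenation of $\{1_{A_k}\}_k$ along $\{A_k\}_k$ is $\sum_k 1_{A_k}1_{A_k}=1_\Omega$, which has infinite support and hence does not belong to $E$, so no concatenation exists in $E$.

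The core of the argument is a coordinatewise description of the relative countable concatenation property, and this is the place where atomicity is essential. Writing $N^{(k)}:=\{Y1_{A_k}\,;\,Y\in N\}$ for the $k$-th coordinate projection of a set $N\subseteq E$, I would prove
\[
\overline{N}^{cc}=\{X\in E\,;\,X1_{A_k}\in N^{(k)}\text{ for all }k\}.
\]
The inclusion ``$\supseteq$'' follows by concatenating along $\{A_k\}_k$ the elements of $N$ that realize the prescribed coordinates; for ``$\subseteq$'' one observes that in any concatenation $X=\sum_j 1_{B_j}X_j$ the set $A_k\cap B_j$ is, up to a null set, either empty or all of $A_k$, so $X$ coincides on $A_k$ with a single $X_{j}\in N$, whence $X1_{A_k}\in N^{(k)}$. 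In particular a subset of $E$ has the relative countable concatenation property precisely when it is coordinatewise, i.e. of the form $\{X\in E\,;\,X1_{A_k}\in S_k\ \forall k\}$.

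It then remains to show that the sum of two coordinatewise sets is coordinatewise. Let $L,M$ have the relative countable concatenation property (assuming $L,M\neq\emptyset$, the empty case being trivial); I claim
\[
L+M=\{X\in E\,;\,X1_{A_k}\in L^{(k)}+M^{(k)}\text{ for all }k\}.
\]
The inclusion ``$\subseteq$'' is clear. For ``$\supseteq$'', note that, because $L,M\subseteq c_{00}$, one has $0\in L^{(k)}$ and $0\in M^{(k)}$ for all but finitely many $k$. Given $X$ in the right-hand side (so $X$ has finite support), choose at each atom a splitting $X1_{A_k}=s_k+t_k$ with $s_k\in L^{(k)}$, $t_k\in M^{(k)}$, taking $s_k=t_k=0$ at the cofinitely many atoms where $X1_{A_k}=0$ and $0\in L^{(k)}\cap M^{(k)}$. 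Then $\ell:=\sum_k s_k 1_{A_k}$ and $m:=\sum_k t_k 1_{A_k}$ have finite support, hence lie in $E$, and by the coordinatewise characterization $\ell\in L$, $m\in M$, while $\ell+m=X$. Since $L+M$ is thus coordinatewise, the first displayed identity applied to $N=L+M$ (using $(L+M)^{(k)}=L^{(k)}+M^{(k)}$) gives $\overline{L+M}^{cc}=L+M$; that is, $L+M$ has the relative countable concatenation property.

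The main obstacle is exactly the inclusion ``$\supseteq$'' in the last display: one must split $X$ as $\ell+m$ with \emph{both} summands of finite support. This is precisely what breaks in Example \ref{exam: counterExSPrccp}, where the ambient module is not a finite-support module (it contains the globally supported $(1,0)$) and a rigid coordinate relation forces one summand to have infinite support. Here every element of $E$ has finite support, so $0$ sits in almost every coordinate projection and the finite-support splitting is always available; this is what makes the finite-support module separate the two concatenation properties.
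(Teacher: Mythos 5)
Your argument is correct and establishes the proposition, but the module you build is not quite the one the paper uses, and this difference is worth flagging. The paper reuses the module of Example \ref{ex: ccp} over the \emph{full Borel} $\sigma$-algebra on $[0,1]$, whereas you replace the $\sigma$-algebra by the atomic one $\sigma(\{A_k\}_k)$. That change is exactly what buys you the clean structural lemma $\overline{N}^{cc}=\{X\in E;\,X1_{A_k}\in N^{(k)}\text{ for all }k\}$: over the Borel $\sigma$-algebra this identity fails, since a partition $\{B_j\}$ may cut across the atoms $A_k$, so relative-countable-concatenation subsets need not be coordinatewise there. Consequently the paper cannot reduce to a coordinatewise picture and instead works directly with an arbitrary partition $\{B_k\}$ and decomposition $1_{B_k}z=1_{B_k}(l_k+m_k)$; its key step is that the set $F_L$ of indices $i$ such that $1_{A_i}l\neq 0$ for \emph{every} $l\in L$ is finite, which is precisely your observation that $0\in L^{(k)}$ for all but finitely many $k$. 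In both proofs this finiteness is what allows truncating the two summands to finitely many atoms so that each lies in the finitely-supported module. So the underlying mechanism is the same; your version is more transparent because atomicity turns $E$ into $c_{00}$ and makes the relative property a purely coordinatewise condition, at the price of proving only the existence statement rather than the paper's implicit stronger claim that the specific module of Example \ref{ex: ccp} already has a sum preserving the relative countable concatenation property. Since the proposition asserts only existence, your proof is fully adequate; just do not cite it as a proof that the Example \ref{ex: ccp} module itself works.
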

\begin{proof}
Let us revisit the $L^0$-module $E$ defined in Example \ref{ex: ccp}:
\[
\begin{array}{cc}
E := span_{L^0} \left\{1_{A_n}\: ; \: n \in \N\right\} &\textnormal{ with }A_n:=[1−\frac{1}{2^{n−1}},1−\frac{1}{2^n}]\textnormal{ for each }n\in\N.
\end{array}
\]
$E$ is an $L^0$-module that is not stable. Let us show that the sum of $E$ preserves the relative stability. Indeed, let us suppose that $L,M$ are relatively stable subsets of $E$.

Let $z\in E$ and $\{B_k\}_{k}\in\Pi(\Omega)$ be such that $1_{B_k}z=1_{B_k}(l_k+m_k)$ with $l_k\in L$ and $m_k\in M$ for each $k\in\N$.

First note that the following set
\[
F_L:=\left\{k\in\N \:;\: 1_{A_k} l\neq 0 \textnormal{ for all }l\in L\right\}
\]
is necessarily finite. Let us define $F_M$ in an analogous way, which is finite either.

Let us put 
\[
z=\underset{
i\in F,k\in\N
}{\sum} 1_{A_i \cap B_k} (l_k + m_k)=
\]
\[
\underset{
i\in F,k\in\N
}{\sum}1_{A_i \cap B_k} (l_k + m_k) +
\underset{
i\in F_L\cup F_M - F,\:
k\in\N
}{\sum}1_{A_i \cap B_k} (l_k + m_k)
=l + m,
\]
with 
\[
\begin{array}{cc}
l=\sum_{i\in F\cup F_L\cup F_M} \sum_{k\in\N} 1_{A_i \cap B_k} l_k, & m=\sum_{i\in F\cup F_L\cup F_M} \sum_{k\in\N} 1_{A_i \cap B_k} m_k.
\end{array}
\]
Since $L$ and $M$ are relatively stable, we conclude that $l\in L$ and $m\in M$. This shows that the sum preserves the relative stability
\end{proof}

Finally, we provide a version for $L^0$-modules of the classical Mazur lemma:

%\begin{lem}
%
%\end{lem}

\begin{thm}
\label{thm: Mazur}
[Randomized version of the Mazur lemma]
Let $(E,\left\Vert \cdot\right\Vert)$ be an $L^0$-normed module whose sum preserves the relative stability,\footnote[1]{In the earliest version of this paper, it was accidentally omitted the hypothesis on $E$ of having sum which preserves the relative stability. The author is grateful to Tiexin Guo for pointing out to him the possible lack of this condition, and for propitiating some interesting discussion which motivated Example \ref{exam: counterExSPrccp} and Proposition \ref{prop: counterExSPrccp}.} and let $\{x_\gamma\}_{\gamma\in \Gamma}$ be a net in $E$, which converges weakly to $x\in E$. Then, for any $\varepsilon\in L^0_{++}$, there exists
\[
z_{\varepsilon} \in \overline{co}^{cc}_{L^0}\{x_\gamma \:; \:\gamma\in \Gamma \} \textnormal{ such that }\left\Vert x - z_\varepsilon \right\Vert \leq \varepsilon.
\] 

%\[
%\left\Vert X - Z_\varepsilon \right\Vert \leq \varepsilon
%\]
\end{thm}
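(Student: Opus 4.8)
The strategy is to mimic the classical Hahn–Banach separation proof of Mazur's lemma, working in the $L^0$-module setting but replacing ordinary convex combinations with the countable concatenation hull of the $L^0$-convex hull. Write $C:=\overline{co}^{cc}_{L^0}\{X_\gamma;\gamma\in\Gamma\}$. The first thing I would check is that $C$ is a $\tau_c$-closed (closure in the locally $L^0$-convex topology), $L^0$-convex subset of $E$ which contains every $X_\gamma$ and which, by construction, has the relative countable concatenation property. The key structural fact is that, because the sum of $E$ preserves the relative countable concatenation property, the $L^0$-convex hull followed by the concatenation hull is again $L^0$-convex and concatenation-stable; this is exactly where the added hypothesis on $E$ is needed and where I expect the main subtlety to lie.

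The heart of the argument is a separation step. Fix $\varepsilon\in L^0_{++}$ and let $\bar C$ denote the $\tau_c$-closure of $C$. I would argue by contradiction: suppose that $\bar C$ does not meet the closed ball $\{Z;\|X-Z\|\le\varepsilon\}$, i.e. that $X$ lies at $L^0$-distance strictly exceeding $\varepsilon$ (on a set of positive measure) from the $L^0$-convex, $\tau_c$-closed, concatenation-stable set $\bar C$. Then I would invoke the randomized Hahn–Banach separation theorem for locally $L^0$-convex modules (available in the appendix, since $\bar C$ has the relative countable concatenation property and is $L^0$-convex and closed) to produce a continuous $L^0$-linear functional $\mu$ on $E$ and a level $Y_0\in L^0$ such that $\mu(Z)\le Y_0<\mathrm{Re}\,\mu(X)$ on some $A\in\mathcal F^+$, for all $Z\in\bar C$, after possibly localizing on $A$. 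In particular this strict inequality would hold on $A$ for every $X_\gamma$, since $X_\gamma\in\bar C$.

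The contradiction then comes from weak convergence. Since $\{X_\gamma\}$ converges weakly to $X$, and $\mu$ is one of the continuous $L^0$-linear functionals defining the weak topology, we have $\mu(X_\gamma)\to\mu(X)$ in $L^0$. But the separation gives $\mu(X_\gamma)\le Y_0<\mu(X)$ on $A$ uniformly in $\gamma$, so passing to the limit forces $\mu(X)\le Y_0<\mu(X)$ on $A$, which is absurd on the positive-measure set $A$. This shows $\bar C$ must meet the closed $\varepsilon$-ball around $X$, giving some $Z$ with $\|X-Z\|\le\varepsilon$. The final point is to replace this $Z\in\bar C$ by an element of $C$ itself: using that $C$ is concatenation-stable and $L^0$-absorbent-type arguments as in Proposition \ref{closureGauge}, one approximates points of the closure by a net in $C$ and, by gluing via a countable partition (as in the proof of Lemma \ref{prop: cccdownwardsDirected}), produces $Z_\varepsilon\in C$ with $\|X-Z_\varepsilon\|\le\varepsilon$ on all of $\Omega$.

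The step I expect to be the main obstacle is precisely the interplay between local separation and global conclusion: the randomized separation theorem only yields strict separation on a set of positive measure, so to obtain a single $Z_\varepsilon$ satisfying the norm bound almost surely one must run the separation argument on a maximal collection of disjoint sets and concatenate the resulting approximants, which is exactly the point at which the hypothesis that the sum of $E$ preserves the relative countable concatenation property becomes indispensable.
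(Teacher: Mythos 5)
Your overall strategy (separate $X$ from the convex/concatenation hull and contradict weak convergence) is the right one and is broadly the paper's, but the separation step as you set it up does not go through. The randomized separation theorem \ref{HahnBanachII} requires $1_A X\cap 1_A K=\emptyset$ for \emph{every} $A\in\mathcal{F}^+$, and this is not implied by the negation of the conclusion: negating ``there exists $Z_\varepsilon$ with $\Vert X-Z_\varepsilon\Vert\leq\varepsilon$'' only gives that for each $Z\in M_1:=\overline{co}^{cc}_{L^0}\{X_\gamma\}$ there is \emph{some} $A_Z\in\mathcal{F}^+$ with $\Vert X-Z\Vert>\varepsilon$ on $A_Z$; since $M_1$ has the relative countable concatenation property one can perfectly well have $1_A X=1_A Z$ for some $A\in\mathcal{F}^+$ and some $Z\in M_1$ while that hypothesis still holds, so the required disjointness condition may simply fail. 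Your closing suggestion of exhausting a maximal family of sets and concatenating does not repair this, because the difficulty is that the hypothesis of \ref{HahnBanachII} can fail outright, not that its conclusion is merely local. The paper circumvents the issue entirely: it thickens the hull to $M:=M_1+B_{\varepsilon/2}$, which is an $L^0$-convex, $L^0$-absorbent neighbourhood of $0$ with the relative countable concatenation property --- and \emph{this} is the one place where the hypothesis that the sum of $E$ preserves that property is used, not (as you suggest) to make $\overline{co}^{cc}_{L^0}$ convex, which holds anyway by refining partitions. Then $X\notin\overline{M}$, Proposition \ref{closureGauge} gives $p_M(X)>1$ on some $C\in\mathcal{F}^+$, and the separating functional is built by hand as $\mu_0(YX):=Y1_C\,p_M(X)$ on $span_{L^0}\{X\}$ and extended by the Hahn--Banach \emph{extension} theorem \ref{HahnBanachI}, dominated by the continuous gauge $p_M$. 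The contradiction with weak convergence then only needs to hold on $C$, which is exactly what this localized functional delivers.

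A secondary inaccuracy: $\overline{co}^{cc}_{L^0}\{X_\gamma\}$ is the countable concatenation hull of the $L^0$-convex hull, not a topological closure, so your claim that $C$ is ``$\tau_c$-closed by construction'' is unwarranted; consequently your final step of pulling a point of the topological closure back into $C$ is needed in your route but absent from (and unnecessary in) the paper's, and as you state it it would only yield $\Vert X-Z_\varepsilon\Vert\leq\varepsilon+\delta$ (harmless since $\varepsilon$ is arbitrary, but it should be said explicitly).
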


%\begin{rem} 
%In the earliest version of this paper, was accidentally omitted the hypothesis on $E$ of having sum which preserves the relative concatenation property. The author is grateful to Tiexin Guo for pointing out to him the possible lack of this condition. Also, some discussion with Tiexin Guo motivated Example \ref{exam: counterExSPrccp} and Proposition \ref{prop: counterExSPrccp}.

%Before proving this statement, we would like to highlight that we do not need to include the countable concatenation on $E$. We argue by way of contradiction, and the argument rely on the fact that the countable concatenation hull —which was defined without assuming the c. c. property on $E$— has the relative countable concatenation property. That is sufficient to reach a contradiction, by using the gauge function and the extension Hahn-Banach Theorem for $L^0$-modules, result that does not require the c. c. property on $E$ either. 
%\end{rem}

\begin{proof}
Define $M_1 := \overline{co}^{cc}_{L^0} \{x_\gamma \:; \: \gamma\in \Gamma \}.$ We may assume that $0\in M_1$, by replacing $x$ by $x-x_{\gamma_0}$ and $x_\gamma$ by $x_\gamma-x_{\gamma_0}$ for some $\gamma_0\in \Gamma$ fixed and all $\gamma\in\Gamma$.

By way of contradiction, suppose that for every $z \in M_1$ there exists $A_z \in \mathcal{A}^+$  such that $\left\Vert x - z \right\Vert > \varepsilon$ on $A_z$.
%\begin{equation}
%\label{ineqI}
%\begin{array}{cc}
%\left\Vert X - Z \right\Vert > \varepsilon & \textnormal{ on } A_Z 
%\end{array}
%\end{equation}

Denote $B_\frac{\varepsilon}{2}:=\{x\in E\:;\: \Vert x \Vert\leq \frac{\varepsilon}{2}\}$, and define $M:=\underset{z\in M_1}\bigcup{z + B_\frac{\varepsilon}{2}}.$

Then $M$ is an $L^0$-convex $L^0$-absorbing neighbourhood of $0\in E$, which is relatively stable (this is because $M_1$ and $B_\frac{\varepsilon}{2}$ are relatively stable and the sum of $E$ preserves the relative stability). Besides, for every $z \in M$ there exists $C_z \in \mathcal{A}^+$ with $\Vert x-z\Vert\geq\frac{\varepsilon}{2}$ on $C_z$. So that $x \notin \overline{M}$.
%\begin{equation}
%\label{ineqII}
%\begin{array}{cc}
%\left\Vert X - Z \right\Vert > \frac{\varepsilon}{2} & \textnormal{ on } C_Z. 
%\end{array}
%\end{equation}

%So that $X \notin \overline{M}$.

Thus, by Proposition \ref{closureGauge} we have that there exists $C\in \mathcal{A}^+$ such that
\begin{equation}
\label{ineqII}
\begin{array}{cc}
p_M (x) > 1 & \textnormal{ on } C,
\end{array}
\end{equation}

where $p_M$ is the gauge function of $M$.

Further, given $\eta,\xi \in L^0$ with $1_C \eta x =1_C \xi x$, it holds that $\eta=\xi$ on $C$. 

Indeed, define $A=(\eta-\xi\geq 0)$
\[
1_C |\eta-\xi| p_M (x)\leq p_M (1_C |\eta-\xi|x)=p_M ((1_{A}-1_{A^c})1_C (\eta-\xi)x)=p_M(0)=0.
\]
In view of (\ref{ineqII}), we conclude that $\eta=\xi$ on $C$.  

Then, we can define the following $L^0$-linear mapping 
\[
\begin{array}{cc}
\mu_0 : & span_{L^0} \{x\}\longrightarrow L^0\\
& \mu_0 (\eta x):=\eta 1_{C} p_M(x).
\end{array}
\]

In addition, we have that
\[
\begin{array}{cc}
\mu_0 (z) \leq p_M (z) & \textnormal{ for all } z\in span_{L^0}\{x\}.
\end{array}
\] 

Thus, by Theorem \ref{HahnBanachI}, $\mu_0$ extends to an $L^0$-mapping application $\mu$ defined on $E$ such that 
\[
\begin{array}{cc}
\mu (z) \leq p_M (z) & \textnormal{ for all } z\in E.
\end{array}
\]
 
Since $M$ is a neighborhood of $0\in E$, by Proposition \ref{closureGauge}, the gauge function $p_M$ is continuous on $E$. Hence $\mu$ is a continuous
$L^0$-linear function defined on $E$.

Furthermore, we have that
\[
\underset{z \in M_1} \esssup \mu (z) \leq \underset{z \in M} \esssup \mu (z) \leq
\]
\[
\begin{array}{cc}
\leq \underset{z \in M} \esssup p_M (z) \leq 1 < p_M (x)=\mu(x)  & \textnormal{ on }C. 
\end{array}
\]
Therefore, $x$ cannot be a weak accumulation point of $M_1$ contrary to the hypothesis of $x_\gamma$ converging weakly to $x$.
\end{proof}

We have the following corollaries:

\begin{cor}
\label{prop: closedw}
Let $(E,\left\Vert \cdot\right\Vert)$ be an $L^0$-normed module whose sum preserves the relative stability, and let $K\subset E$  be $L^0$-convex and relatively stable, we have that the closure in norm, the closure in the weak topology, and the closure in the stable weak topology are the same, i.e. 
\[
\overline{K}^{\Vert \cdot \Vert} = \overline{K}^{\sigma(E,E^*)}=\overline{K}^{\sigma(E,E^*)_{cc}}.
\]
\end{cor}
\begin{proof}

We have that the norm topology is finer than the stable weak topology, and the latter is finer than the weak topology. Thus we obtain
\[
\overline{K}^{\Vert \cdot \Vert}\subset \overline{K}^{\sigma(E,E^*)_{cc}} \subset \overline{K}^{\sigma(E,E^*)}.
\]
On the other hand, for given $x\in \overline{K}^{\sigma(E,E^*)}$, let us choose a net $\{x_\gamma\}\subset K$ which converges to $x$. Due to Theorem \ref{thm: Mazur},  $K$ being $L^0$-convex and relatively stable, for each $\varepsilon\in L^0_{++}$, we can find 
\[
z_\varepsilon\in \overline{co}^{cc}_{L^0}\{x_\gamma \:; \:\gamma\in \Gamma \}\subset K,
\]  
such that $\Vert z_\varepsilon - x\Vert\leq \varepsilon$.

This means that $x\in \overline{K}^{\Vert \cdot \Vert}$.
\end{proof}

Then, from now on, provided that $E$ has sum which preserves the relative stability or, more particularly, if $E$ is stable, for any $L^0$-convex subset $K$ is relatively stable, we will denote the topological closure by $\overline{K}$ without specifying whether the topology is either weak, stably weak or strong.
 
Let us recall some notions introduced in \cite{key-8}:

\begin{defn}
Let $E[\mathscr{T}]$ be a topological $L^0$-module. A function $f:E\rightarrow\bar{L}^{0}$ is called proper if $f(E)\cap L^0 \neq \emptyset$ and $f>-\infty$; it is said to be $L^0$-convex if $f (\eta x_1 + (1 − \eta)x_2) \leq \eta f (x_1) + (1 − \eta)f (x_2)$ for all $x_1,x_2 \in E$ and
$\eta \in L^0$ with $0 \leq \eta \leq 1$; it said to have the local property if $1_A f(x)=1_A f(1_A x)$ for $A\in\mathcal{A}^+$, and $x\in E$; and finally, $f$ is called lower semicontinuous if the level set $V\left(\eta\right)=\left\{ x\in E\:;\: f\left(x\right)\leq \eta\right\}$ is closed for all $\eta\in L^0$.
\end{defn}

\begin{cor}
\label{cor: lowerSemCont}
Let $(E,\left\Vert \cdot\right\Vert)$ be an $L^0$-normed module whose sum preserves the relative stability, and let $f:E\rightarrow \bar{L}^0$  be a proper $L^0$-convex function. If $f$ is continuous, then $f$ is lower semicontinuous with the weak topology. 
\end{cor}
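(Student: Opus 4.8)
The plan is to show that $f$ is lower semicontinuous for the weak topology by proving that each level set $V(Y_0):=\{X\in E;\ f(X)\leq Y_0\}$ is weakly closed. I would obtain this from Corollary \ref{prop: closedw}: if I can check that $V(Y_0)$ is $L^0$-convex, has the relative countable concatenation property, and is norm-closed, then its norm closure and its weak closure coincide, so $V(Y_0)=\overline{V(Y_0)}^{\Vert\cdot\Vert}=\overline{V(Y_0)}^{\sigma(E,E^*)}$ is weakly closed, which is exactly weak lower semicontinuity. The hypothesis that the sum of $E$ preserves the relative countable concatenation property is exactly what is needed to invoke that corollary.

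The two easy verifications come first. $L^0$-convexity of $V(Y_0)$ is immediate from the $L^0$-convexity of $f$: if $f(X_1),f(X_2)\leq Y_0$ and $0\leq Y\leq 1$, then $f(YX_1+(1-Y)X_2)\leq Y f(X_1)+(1-Y)f(X_2)\leq Y_0$. Norm-closedness follows from the continuity of $f$ together with the fact that the half-line $\{Z\in\bar{L}^0;\ Z\leq Y_0\}$ is closed in $\bar{L}^0$: indeed, if $Z_\alpha\leq Y_0$ and $Z_\alpha\to Z$, then $Z-Y_0\leq |Z_\alpha-Z|\leq\varepsilon$ eventually for every $\varepsilon\in L^0_{++}$, whence $Z\leq Y_0$. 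Thus $V(Y_0)=f^{-1}(\{Z\leq Y_0\})$ is norm-closed.

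The crux is the relative countable concatenation property of $V(Y_0)$, and for this I would first derive that every $L^0$-convex $f$ enjoys the local property. Taking $Y=1_A$ in the convexity inequality with $(X_1,X_2)=(X,0)$ and multiplying by $1_A$ gives $1_A f(1_A X)\leq 1_A f(X)$; taking instead $(X_1,X_2)=(1_A X,X)$ and multiplying by $1_A$ gives the reverse inequality $1_A f(X)\leq 1_A f(1_A X)$ (the terms carrying $1_{A^c}$ vanish after multiplication, even on the set where $f=+\infty$, using $0\cdot\infty=0$). Hence $1_A f(X)=1_A f(1_A X)$ for every $A\in\mathcal{F}$. Now, if $X$ is a countable concatenation of $\{X_n\}_n\subset V(Y_0)$ along $\{A_n\}_n$, i.e. $1_{A_n}X=1_{A_n}X_n$, the local property yields $1_{A_n}f(X)=1_{A_n}f(1_{A_n}X)=1_{A_n}f(1_{A_n}X_n)=1_{A_n}f(X_n)\leq 1_{A_n}Y_0$; summing over the partition gives $f(X)=\sum_n 1_{A_n}f(X)\leq Y_0$, so $X\in V(Y_0)$, and $V(Y_0)$ has the relative countable concatenation property.

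I expect the main obstacle to be precisely this last step: one must notice that $L^0$-convexity alone forces the local property (the statement does not assume it) and that the sandwich argument remains valid on the set where $f=+\infty$. Once $V(Y_0)$ is known to be $L^0$-convex, norm-closed, and to possess the relative countable concatenation property, Corollary \ref{prop: closedw} applies verbatim and shows $V(Y_0)$ is weakly closed for every $Y_0\in L^0$, which completes the proof.
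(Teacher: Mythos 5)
Your argument is correct and is essentially the paper's own proof: both reduce the claim to showing that each level set $V(Y_0)$ is $L^0$-convex, norm-closed, and has the relative countable concatenation property, and then invoke Corollary \ref{prop: closedw}. The only difference is that you derive the local property of $f$ from $L^0$-convexity directly (correctly), whereas the paper simply cites Theorem 3.2 of Filipovic et al.\ for that fact.
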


\begin{proof}
It is a known fact that, if $f$ is $L^0$-convex, then it has the local property (see \cite[Theorem 3.2]{key-8}).

Being $f$ $L^0$-convex and with the local property, we have that $V(\eta)$ is $L^0$-convex and has is relatively stable.

Since $f$ is continuous, we have that $V(\eta)$ is norm closed, and due to  Corollary \ref{prop: closedw}, it is weakly closed as well.

\end{proof}

\section{A randomized version of Krein-\v{S}mulian theorem}
\label{sec-3}

In this section we provide a generalization of the classical theorem of Krein-\v{S}mulian in the context of the $L^0$-normed modules. We will use arguments of completeness and the randomized bipolar theorem (see \ref{thm: bipolar}). In the Appendix we recall the notion of polar and results related.

Before proving the main result of this section, let us introduce some necessary notions.
 
Let $(E,\Vert\cdot\Vert)$ be an $L^0$-normed module. Given $\varepsilon \in L^0_{++}$ define
\[
W_\varepsilon := \left\{ (A,B) \:; \: A,B \subset E,\textnormal{ }  \Vert x - y \Vert \leq \varepsilon \textnormal{ for all } x\in A , y\in B \right\}.
\]      

Then $\mathcal{W}:=\{W_\varepsilon \:; \: \varepsilon \in L^0_{++} \}$ is the base of an uniformity on $E$.

Let $\mathcal{G}$ be the set of all nonempty closed subsets of $E$.

A net $\{A_\gamma\}_{\gamma\in\Gamma}$ in $\mathcal{G}$ is said to converge to $A \in \mathcal{G}$ if for each $\varepsilon\in L^0_{++}$ there is some $\gamma_\varepsilon$ with $(A_{\gamma},A)\in W_\varepsilon$ for all $\gamma \geq \gamma_\varepsilon$. Note that $A$ is unique. 

$\{A_\gamma\}_{\gamma\in\Gamma}$ is said to be Cauchy if for each $\varepsilon\in L^0_{++}$ there is some $\gamma_\varepsilon$ with $(A_{\alpha},A_{\beta})\in W_\varepsilon$ for all $\alpha,\beta \geq \gamma_\varepsilon$.

Further, $E$ is said to be complete if every Cauchy net $\{x_\gamma\}_{\gamma\in\Gamma}\subset E$ converges to some $x\in E$. 

\begin{prop}
\label{prop: KS}
If $(E,\Vert\cdot\Vert)$ is complete, then every Cauchy net $\{A_\gamma\}_{\gamma\in\Gamma}$ in $\mathcal{G}$ converges. Moreover, if $\{A_\gamma\}_{\gamma\in\Gamma}$ is decreasing, then it converges to $A:=\bigcap_{\gamma\in\Gamma} A_{\gamma}$.

\end{prop}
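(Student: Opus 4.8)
The plan is to reduce the convergence of the net of sets to the convergence of a net of points selected from them, using completeness of $E$. First I would use that each $A_\gamma$ is nonempty (being an element of $\mathcal{G}$) to pick a point $X_\gamma\in A_\gamma$ for every $\gamma\in\Gamma$. Since $\{A_\gamma\}_{\gamma\in\Gamma}$ is Cauchy, for every $\varepsilon\in L^0_{++}$ there is $\gamma_\varepsilon$ with $(A_\alpha,A_\beta)\in W_\varepsilon$ whenever $\alpha,\beta\geq\gamma_\varepsilon$; unwinding the definition of $W_\varepsilon$, this yields $\Vert X_\alpha-X_\beta\Vert\leq\varepsilon$ for all such $\alpha,\beta$, so $\{X_\gamma\}_{\gamma\in\Gamma}$ is a Cauchy net in $E$. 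Completeness of $E$ then produces a limit $X\in E$, and I would take the candidate limit of the net of sets to be the singleton $\{X\}$, which is closed and nonempty, hence lies in $\mathcal{G}$.

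To verify $A_\gamma\to\{X\}$, fix $\varepsilon\in L^0_{++}$ and take any $\gamma\geq\gamma_{\varepsilon/2}$ and any $Y\in A_\gamma$. For $\beta\geq\gamma_{\varepsilon/2}$ the Cauchy estimate gives $\Vert Y-X_\beta\Vert\leq\varepsilon/2$, and choosing $\beta$ large enough that $\Vert X_\beta-X\Vert\leq\varepsilon/2$ (possible since $X_\beta\to X$), the triangle inequality for the $L^0$-norm gives $\Vert Y-X\Vert\leq\varepsilon$. Hence $(A_\gamma,\{X\})\in W_\varepsilon$ for all $\gamma\geq\gamma_{\varepsilon/2}$, which is exactly the assertion $A_\gamma\to\{X\}$. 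As a consistency check, the definition of $W_\varepsilon$ forces any limit set to have $L^0$-norm diameter zero, so by definiteness of the norm the limit must be a singleton; this explains why $\{X\}$ is the correct candidate.

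For the second assertion I would show $\bigcap_{\gamma}A_\gamma=\{X\}$. For the inclusion $\supseteq$: fixing $\gamma_0$, the tail $\{X_\gamma\}_{\gamma\geq\gamma_0}$ lies in $A_{\gamma_0}$ by the decreasing hypothesis and still converges to $X$; since $A_{\gamma_0}$ is closed, $X\in A_{\gamma_0}$, and as $\gamma_0$ is arbitrary, $X\in\bigcap_\gamma A_\gamma$. For $\subseteq$: if $Z\in\bigcap_\gamma A_\gamma$ then $Z\in A_\gamma$ for every $\gamma$, so $A_\gamma\to\{X\}$ gives $\Vert Z-X\Vert\leq\varepsilon$ for each $\varepsilon\in L^0_{++}$; definiteness of the $L^0$-norm then forces $Z=X$. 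Thus $\bigcap_\gamma A_\gamma=\{X\}$, which is precisely the limit found in the first part.

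I do not anticipate a serious obstacle: the argument is the $L^0$-analogue of the classical fact that a Cauchy net of closed sets collapses to a single point. The only steps requiring care are that the selection $X_\gamma\in A_\gamma$ yields a genuinely Cauchy net in $E$ (this is where the strong definition of $W_\varepsilon$, rather than a Hausdorff-type distance, is used, since any two points of the same $A_\gamma$ are already $\varepsilon$-close), and that every ``smallness'' conclusion rests on the definiteness $\Vert W\Vert=0\Rightarrow W=0$ of the $L^0$-norm. The module and countable-concatenation structure play no role in this particular proposition.
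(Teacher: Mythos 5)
Your proof is correct, and its core mechanism is the same as the paper's: pick a selection $X_\gamma\in A_\gamma$, observe that the strong form of $W_\varepsilon$ makes $\{X_\gamma\}$ a Cauchy net of points, and invoke completeness of $E$. The differences are worth noting. The paper takes as candidate limit the set $A=\{\lim X_\gamma;\ X_\gamma\in A_\gamma \text{ for all }\gamma\}$ of limits of all selections and shows convergence to $\overline{A}$; you instead observe up front that any limit under this uniformity must have diameter zero, hence is a singleton $\{X\}$, which is a cleaner identification (and in fact the paper's $\overline{A}$ collapses to exactly this singleton, since any two selections are eventually $\varepsilon$-close to each other). More substantively, you actually prove the ``moreover'' clause: the paper's printed argument stops after establishing $(\overline{A},A_\alpha)\in W_\varepsilon$ and never verifies that, for a decreasing net, the limit equals $\bigcap_\gamma A_\gamma$. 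Your two inclusions --- $X\in\bigcap_\gamma A_\gamma$ because each tail of the selection lies in the closed set $A_{\gamma_0}$, and conversely any $Z$ in the intersection satisfies $\Vert Z-X\Vert\leq\varepsilon$ for every $\varepsilon\in L^0_{++}$, hence $Z=X$ by definiteness of the $L^0$-norm --- fill exactly that gap, and they also show the intersection is nonempty, which is needed for it to belong to $\mathcal{G}$ at all. So your write-up is not only correct but more complete than the one in the paper.
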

\begin{proof}

If $\{A_\gamma\}_{\gamma\in\Gamma}$ is Cauchy we have that for every $\varepsilon\in L^0_{++}$ there exists $\gamma_\varepsilon$ such that
\begin{equation}
\label{CauchyNet}
\Vert x - y \Vert \leq \varepsilon \textnormal{ for all }x\in A_\alpha, y\in A_\beta \textnormal{ with } \alpha,\beta\geq \gamma_\varepsilon 
\end{equation}
In particular, every net $\{x_\gamma\}_{\gamma\in\Gamma}$ with $x_\gamma\in A_\gamma$ is Cauchy and therefore converges as $E$ is complete.

Define $A:=\{ \lim x_\gamma \:; \: x_\gamma\in A_\gamma \textnormal{ for all }\gamma\in\Gamma\}$. Let us show that $\{A_\gamma\}_{\gamma\in\Gamma}$ converges to $\overline{A}$.

Indeed, given $x\in A$ there exists a net $\{x_\gamma\}_{\gamma\in\Gamma}$ with $x_\gamma\in A_\gamma$ which converges to $x$. Then
by (\ref{CauchyNet}) it holds that
\[
\Vert x_\gamma - y \Vert \leq \varepsilon \textnormal{ for all }y\in A_\alpha, \textnormal{ with } \alpha,\gamma\geq \gamma_\varepsilon.
\]
By taking limits in $\gamma$
\[
\Vert x - y \Vert \leq \varepsilon \textnormal{ for all }y\in A_\alpha, \textnormal{ with } \alpha \geq \gamma_\varepsilon.
\]
Since $x\in A$ is arbitrary, it holds that $(\overline{A},A_\alpha) \in W_\varepsilon$ for all $\alpha \geq \gamma_\varepsilon$, and the proof is complete.

\end{proof}

Finally, let us prove the randomized version of the Krein-\v{S}mulian theorem

\begin{thm} 
\label{thm: Krein-Smulian}
[Randomized version of the Krein-\v{S}mulian theorem]
Let $(E,\Vert\cdot\Vert)$ be a complete $L^0$-normed module which is stable with uniqueness and let $K\subset E^*$ be $L^0$-convex and relatively stable. Then the following statements are equivalent:
\begin{enumerate}
	\item $K$ is weak-$*$ closed;  
	\item $K\cap \left\{ z \in E^* \:; \: \Vert z \Vert^* \leq \varepsilon \right\}$ is weak-$*$ closed for each $\varepsilon \in L^0_{++}$.
\end{enumerate}
\end{thm}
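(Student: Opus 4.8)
The plan is to establish the two implications separately. The implication $(1)\Rightarrow(2)$ is routine: the dual $L^0$-norm $\|\cdot\|^*$ is lower semicontinuous for the weak-$*$ topology $\sigma(E^*,E)$, being the essential supremum of the weak-$*$ continuous maps $Z\mapsto|\langle Z,X\rangle|$ as $X$ ranges over the unit ball of $E$; hence each set $\{Z\in E^*;\ \|Z\|^*\leq\varepsilon\}$ is weak-$*$ closed, and its intersection with the weak-$*$ closed set $K$ is weak-$*$ closed. I would dispatch this in one or two lines.

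The substance is $(2)\Rightarrow(1)$, which I would reduce to a separation statement by means of the randomized bipolar theorem \ref{thm: bipolar}. We may assume $K\neq\emptyset$; fixing $Z_1\in K$ and translating by $-Z_1$ we may further assume $0\in K$ (translation by a fixed element of $E^*$ is a weak-$*$ homeomorphism, preserves $L^0$-convexity and the relative countable concatenation property, and preserves hypothesis (2) because $\{Z;\ \|Z-Z_1\|^*\leq\varepsilon\}\subseteq\{Z;\ \|Z\|^*\leq\varepsilon+\|Z_1\|^*\}$, so each translated ball-section is again the intersection of a weak-$*$ closed ball-section of $K$ with a weak-$*$ closed ball). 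Writing $K^{\circ}:=\{X\in E;\ \langle Z,X\rangle\leq 1\textnormal{ for all }Z\in K\}$ for the one-sided polar taken in the predual $E$, the bipolar theorem gives $K^{\circ\circ}=\overline{K}^{\sigma(E^*,E)}$; since always $K\subseteq K^{\circ\circ}$, it suffices to show $K^{\circ\circ}\subseteq K$. Thus the goal becomes: for every $Z_0\in E^*$ with $Z_0\notin K$, produce $X\in K^{\circ}$ together with a set $A\in\mathcal F^+$ on which $\langle Z_0,X\rangle>1$, which exhibits $Z_0\notin K^{\circ\circ}$.

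Producing such an $X$ is the heart of the matter, and here I would transcribe the classical Krein–\v{S}mulian / Banach–Dieudonné construction into the module. Setting $K_n:=K\cap\{Z;\ \|Z\|^*\leq n\}$ (each weak-$*$ closed by (2)), I would build inductively finite sets $F_n\subset E$ whose elements have $L^0$-norm tending to $0$, such that the finitely many constraints defining $(F_1\cup\cdots\cup F_n)^{\circ}$ already separate $0$ from the part of $K_{n+1}$ not yet excluded. Each step is possible because the relevant set is weak-$*$ closed and omits $0$, so its complement, being weak-$*$ open, contains a basic weak-$*$ neighbourhood of $0$, i.e. one determined by finitely many elements of the predual $E$; no compactness is needed, only the weak-$*$ closedness granted by (2). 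The separating element is then extracted from the whole family $\{F_n\}$: its elements form an $L^0$-norm-null sequence, and the completeness of $E$, together with the preceding Proposition on convergence of Cauchy nets of closed sets in $\mathcal G$, is exactly what guarantees that the associated limiting functional is realised inside $E$ (equivalently, inside $K^{\circ}$), which is the randomized substitute for the Alaoglu compactness invoked classically. The gauge-function description of closures in Proposition \ref{closureGauge} and the Hahn–Banach extension theorem \ref{HahnBanachI}, used just as in the proof of the Mazur lemma, supply the algebraic backbone of this extraction.

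The principal obstacle is twofold and is precisely what distinguishes the module setting. First, the separating functional must be weak-$*$ continuous, i.e. an element of the predual $E$ rather than of $E^{**}$; securing this is the entire point of the Krein–\v{S}mulian construction, and in the randomized framework it is the completeness of $E$ that forces the null-sequence data to converge to an element of $E$. Second, $L^0$ carries no total order, so the integer ``onion layers'' $K_n$ of the classical proof must be replaced by an exhaustion of $\Omega$ adapted to the random radii $\varepsilon\in L^0_{++}$ of hypothesis (2): a separation obtained only on a set $A\in\mathcal F^+$ has to be upgraded to all of $\Omega$ by partitioning $\Omega$ into pieces on which bounded ball-sections suffice and concatenating the resulting functionals. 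This is where the countable concatenation property of $E$ and the relative countable concatenation property of $K$ are indispensable, and where the delicate bookkeeping lies — one must keep the $L^0$-norm bounds uniform across the partition so that the concatenated object still lies in the complete module $E$.
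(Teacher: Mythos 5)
Your reduction via the bipolar theorem to ``find $X\in K^{\circ}$ with $\langle Z_0,X\rangle>1$ on some $A\in\mathcal F^+$'' is a legitimate target, and the translation argument and the direction $(1)\Rightarrow(2)$ are fine. But the heart of your argument — the inductive construction of the finite sets $F_n$ — has a genuine gap, and it is located exactly where you claim there is none. You assert that each inductive step needs ``no compactness, only the weak-$*$ closedness granted by (2)'': the complement of the weak-$*$ closed set $S_n:=K_{n+1}\cap F_1^{\circ}\cap\cdots\cap F_{n-1}^{\circ}$ is weak-$*$ open and contains a basic neighbourhood of $0$, hence a finite set of predual elements separating $0$ from $S_n$. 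That does produce \emph{some} finite set $F_n$ with $F_n^{\circ}\cap S_n=\emptyset$, but with no control whatsoever on the norms of its members, whereas the entire construction hinges on $F_n\subset\frac{1}{n}B_E$ so that the union $\bigcup_n F_n$ is a norm-null family from which a separating element of $E$ (not merely of $E^{**}$) can be extracted. In the classical proof the norm bound on $F_n$ is obtained precisely from weak-$*$ compactness of the bounded weak-$*$ closed set $S_n$ (Alaoglu plus the finite intersection property, using $(\frac{1}{n}B_E)^{\circ}=nB^*$); no randomized Alaoglu theorem is available in this paper's toolkit, and completeness of $E$ is not a substitute for it at this step. The subsequent extraction (classically: map $K$ into $c_0$ via the null sequence, separate there by $\ell^1$, and sum the resulting series in $E$) is also only gestured at — Proposition \ref{closureGauge} and Theorem \ref{HahnBanachI} do not by themselves supply an $L^0$-analogue of the $c_0$--$\ell^1$ duality, and the additional $\Omega$-partitioning you rightly anticipate would have to be woven through this already-missing compactness argument.

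For comparison, the paper avoids the Banach--Dieudonn\'e construction entirely. It forms the decreasing net $\{(B_\varepsilon^{o}\cap K)^{o}\}_{\varepsilon\in L^0_{++}}$ of closed subsets of the \emph{predual} $E$, proves it is Cauchy for the Hausdorff-type uniformity $\mathcal W$ by pure polar calculus together with the bipolar theorem (this is where hypothesis (2) enters, guaranteeing $(B_\delta^{o}\cap K)^{oo}=B_\delta^{o}\cap K$), invokes completeness of $E$ via the preceding Proposition to conclude the net converges to $C:=\bigcap_\varepsilon(B_\varepsilon^{o}\cap K)^{o}$, and then shows $K=C^{o}$ by two more polar computations; weak-$*$ closedness of $K$ follows since polars are weak-$*$ closed. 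That route needs no compactness, no null sequences and no sequence-space duality, which is why it survives the passage to $L^0$-modules. If you want to salvage your approach you would first have to prove a randomized Alaoglu-type compactness statement; as written, the inductive step does not go through.
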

\begin{proof}

$1 \Rightarrow 2$: It is clear because $\left\{ z \in E^* \:; \: \Vert z \Vert^* \leq \varepsilon \right\}=B_{\varepsilon}^o$ is weak-$*$ closed.

$2 \Rightarrow 1$: For each $\varepsilon\in L^0_{++}$ we have that ${B_\varepsilon}^o \cap K$ is weak-$*$ closed. Then the net $\{ ({B_\varepsilon}^o \cap K)^o \}_{\varepsilon\in L^0_{++}}$ is Cauchy. Indeed, for $\delta , \delta' \leq \varepsilon/2$, by using the properties of the polar (see \ref{prop: polar}) and the bipolar theorem  (see \ref{thm: bipolar}):  
\[ 
({B_\delta}^o \cap K)^o+ B_{\varepsilon} \supset \overline{({B_\delta}^o \cap K)^o+B_{\varepsilon/2}}=(({B_\delta}^o \cap K)^o+B_{\varepsilon/2})^{oo}\supset
\]     
\[
\supset (({B_\delta}^o \cap K)^{oo}\cap B_{\varepsilon/2}^o)^o=({B_\delta}^o \cap K\cap B_{\varepsilon/2}^o)^o\supset ({B_{\delta'}}^o \cap K)^o.
\]
Since the net is decreasing, due to Proposition \ref{prop: KS}, it converges to $C:=\bigcap_{\varepsilon\in L^0_{++}} ({B_{\varepsilon}}^o \cap K)^o$. 

Let us see that $K=C^o$. Indeed, $C\subset({B_{\varepsilon}}^o \cap K)^o$ so ${B_{\varepsilon}}^o \cap K\subset C^o$ for all $\varepsilon\in L^0_{++}$ and therefore $K\subset C^o$.

Let $\varepsilon, r\in L^0_{++}$ with $r>1$. Since the net converges to $C$, there is $\delta\in L^0_{++}$ with 
\[
({B_{\delta}^o \cap K)^o \subset C + (r-1)B_{\varepsilon}} \subset (C\cup B_{\varepsilon})^{oo} + (r-1)(C\cup B_{\varepsilon})^{oo} =r(C\cup B_{\varepsilon})^{oo}
\]  
and by taking polars, it follows that 
\[
[r(C\cup B_{\varepsilon})^{oo}]^o=\frac{1}{r}(C\cup B_{\varepsilon})^o \subset (B_{\delta}^o \cap K)^{oo}=B_{\delta}^o \cap K
\]
and thus $C^o \cap B_{\varepsilon}^o \subset r(B_{\delta}^o \cap K)$ for all $r\in L^0_{++}$, $r>1$.
Therefore
\[
B_{\varepsilon}^o \cap C^o \subset \underset{r\in L^0_{++},r<1} \bigcap r(B_{\delta}^o \cap K)\subset \overline{B_{\delta}^o \cap K}=B_{\delta}^o \cap K\subset K,
\]
 and the proof is complete.

\end{proof}

%%%%%%%%%%%%%%%%%%%%%%
\begin{rem}
After writing the first version of this paper, the author knew from Asgar Jamneshan of the work \cite{key-21}. So, the author is grateful to Asgar Jamneshan for pointing out this reference.
  %
%Namely, K.-T. Eisele and S. Taieb \cite{key-18} proved a version of Krein-\v{S}mulian theorem under the less general structure of $L^\infty$-modules by using a different strategy-proof. 
%
 S. Drapeau et al. \cite{key-21}, in an abstract level, introduced the algebra of conditional sets, in which strong stability properties are assumed on all structures. This framework is related to the $L^0$-theory, in the sense that if we assume stability properties with uniqueness on an $L^0$-module, we obtain a conditional set. In this context, it was proved a version of Krein-\v{S}mulian theorem with a different strategy-proof. However we would like to emphasize that we work under weaker stability properties. For instance, the relative stability assumes neither the existence nor the uniqueness of a concatenation. Besides, briefly speaking, topologies used here does not need to be a 'stable family' in the sense proposed by \cite{key-21}. For instance, the weak topologies employed for the randomized Mazur's lemma and Krein-\v{S}mulian theorem proved here are not necessarily stable families. In fact, Example \ref{ex: weakTop} shows that the weak topology employed is not stable, and is strictly weaker than its stable version.

Also, the randomized Mazur's lemma proved here applies to  modules of the type provided in Proposition \ref{prop: sickModules}, which are not stable but their sum preserves the relative stability. Such modules fall out of the scope of \cite{key-21}; however, model spaces  with some lack of stability could be eventually subject of financial applications.          
\end{rem}
%%%%%%%%%%%%%%%%%%%%%%

\begin{appendix}

\section{Appendix}

Let us collect some important results from the theory of locally $L^0$-convex modules:
		
\begin{thm}
\cite[Hahn–Banach extension theorem]{key-8}
\label{HahnBanachI} Let $E[\mathscr{T}]$ be a locally $L^0$-convex module. 
Consider an $L^0$-sublinear function $p : E → L^0$, an $L^0$-submodule $C$ of $E$ and an $L^0$-linear function $\mu : C → L^0$ such that
\[
\begin{array}{cc}
\mu(x)\leq  p(x) & \textnormal{ for all }x\in C
\end{array}
\]

Then $\mu$ extends to an $L^0$-linear function $\tilde{\mu} : E → L^0$ such that $\tilde{\mu}(x)\leq  p(x)$ for all $x \in E$.

\end{thm}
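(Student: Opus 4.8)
The plan is to mimic the classical proof of Hahn--Banach, replacing the one-dimensional extension step by an extension along a single $L^0$-generator and circumventing the lack of a total order on $L^0$ by means of essential infima and suprema. First I would establish a one-step extension lemma: given the $L^0$-submodule $C$, the $L^0$-sublinear $p$, the dominated $L^0$-linear $\mu$ on $C$, and a vector $X_0\in E$, I would extend $\mu$ to the $L^0$-submodule $C+L^0X_0$ generated by $C$ and $X_0$, and then exhaust $E$ by Zorn's lemma.

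For the one-step lemma the candidate value $c:=\tilde\mu(X_0)\in L^0$ has to satisfy, for all $Z,Z'\in C$,
\[
\mu(Z)-p(Z-X_0)\;\le\; c\;\le\; p(Z'+X_0)-\mu(Z').
\]
Exactly as in the scalar case, the existence of such $c$ reduces to the single inequality $\mu(Z)+\mu(Z')\le p(Z-X_0)+p(Z'+X_0)$; this follows from $L^0$-linearity of $\mu$, subadditivity of $p$, and the hypothesis $\mu\le p$ applied to $Z+Z'$. I would then set $c:=\esssup_{Z\in C}\bigl(\mu(Z)-p(Z-X_0)\bigr)$, which lies in $L^0$ since it is bracketed between $-p(-X_0)$ and $p(X_0)$, define $\tilde\mu(Y+\lambda X_0):=\mu(Y)+\lambda c$, and verify the domination $\tilde\mu\le p$ on $C+L^0X_0$ by splitting $\Omega$ according to the sign of $\lambda$: on $(\lambda>0)$ and on $(\lambda<0)$ one divides by $|\lambda|$ and uses the local property together with the positive $L^0$-homogeneity of $p$ and $\mu$ to recover precisely the two defining inequalities for $c$, while on $(\lambda=0)$ the estimate is merely $\mu\le p$ on $C$.

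The step I expect to be the main obstacle is the well-definedness of $\tilde\mu$, which is exactly where the module structure departs from the scalar one. Over a field, $X_0\notin C$ forces $\lambda X_0\in C\Rightarrow\lambda=0$, so the representation $Y+\lambda X_0$ is unique; over $L^0$ one may instead have $1_AX_0\in C$ on some $A\in\mathcal{F}^+$. The set of admissible coefficients $I:=\{\eta\in L^0:\eta X_0\in C\}$ is an ideal of $L^0$, and a countable concatenation argument (using the concatenation stability available for the relevant submodule) identifies it with $1_{A^*}L^0$ for an essentially largest $A^*$. On $A^*$ the value of $c$ is then forced by $L^0$-linearity of $\mu$, namely $1_{A^*}c=\mu(1_{A^*}X_0)$, while on $(A^*)^c$ the representation is essentially unique and $c$ may be chosen freely by the bracketing above; gluing the two pieces by a countable concatenation yields a well-defined, $p$-dominated, $L^0$-linear $\tilde\mu$ on $C+L^0X_0$.

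Finally I would close the argument with Zorn's lemma applied to the set of $p$-dominated $L^0$-linear extensions of $\mu$, ordered by extension. The union of a chain of such extensions is again an $L^0$-submodule carrying a dominated $L^0$-linear functional, so a maximal extension $(D_{\max},\nu_{\max})$ exists; if $D_{\max}\neq E$, applying the one-step lemma to any $X_0\in E\setminus D_{\max}$ would enlarge the domain, contradicting maximality. Hence $D_{\max}=E$ and $\tilde\mu:=\nu_{\max}$ is the sought extension.
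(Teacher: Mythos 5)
The paper does not actually prove this theorem: it is quoted in the appendix from Filipovi\'c--Kupper--Vogelpoth \cite{key-8}, so there is no in-paper argument to compare against. Your strategy (one-step extension along $L^0 X_0$ with $c:=\esssup_{Z\in C}(\mu(Z)-p(Z-X_0))$, then Zorn) is the standard route from that reference, and most of it is sound. The one step that fails as written is the claim that $I:=\{\eta\in L^0:\eta X_0\in C\}$ equals $1_{A^*}L^0$ for an essentially largest $A^*$. The family $\mathcal{A}:=\{A\in\mathcal{F}:1_{A}X_0\in C\}$ is closed under finite unions and under passing to subsets, but it is closed under \emph{countable} unions only if $C$ has some countable concatenation property --- and the theorem assumes nothing of the sort about the submodule $C$. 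Concretely, take $E=L^0$, $C=span_{L^0}\{1_{A_n}\,;\,n\in\N\}$ as in Example \ref{ex: ccp}, and $X_0=1$: then every finite union of the $A_n$ lies in $\mathcal{A}$ but $\Omega$ does not, so $I$ is not of the form $1_{A^*}L^0$ and your gluing construction has nothing to glue along. (Even granting a maximal $A^*$, you would still have to check that the value forced on $A^*$, namely $\mu(1_{A^*}X_0)$, is compatible with the bracketing inequalities there; you do not address this.)

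The good news is that the detour through $A^*$ is unnecessary, and your own choice of $c$ already repairs the gap. Well-definedness of $\tilde\mu(Y+\lambda X_0):=\mu(Y)+\lambda c$ reduces (after dividing by $\lambda-\lambda'$ on its support) to showing $1_A c=\mu(1_A X_0)$ for \emph{each individual} $A$ with $1_A X_0\in C$, and this holds automatically for $c=\esssup_{Z\in C}\bigl(\mu(Z)-p(Z-X_0)\bigr)$: taking $Z=1_A X_0$ and using $1_A\,p(-1_{A^c}X_0)=p(0)=0$ gives $1_A c\geq 1_A\mu(1_A X_0)$, while for arbitrary $Z\in C$ the local property of $p$ and $\mu$ together with $\mu\leq p$ on $C$ give $1_A\bigl(\mu(Z)-p(Z-X_0)\bigr)=\mu(1_A Z)-p(1_A Z-1_A X_0)\leq \mu(1_A Z)-\mu(1_A Z-1_A X_0)=1_A\mu(1_A X_0)$, whence $1_A c\leq 1_A\mu(1_A X_0)$. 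With that substitution (no maximal $A^*$, no structure theorem for the ideal $I$), the rest of your argument --- the bracketing inequality, the sign-splitting verification of $\tilde\mu\leq p$, and the Zorn step --- goes through.
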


%%%%%%%%%%%%%%%%%%%%%
%%%%%%%%%%%%%%%%%%%%

\begin{defn}
Given $A\subset E$, we define the polar of $A$ the subset $A^o$ of $E^*$ given by 
\[
A^o:=\{y\in E^* \:; \: |\langle x,y\rangle|\leq 1, \textnormal{ for all }x\in A\}.
\] 
\end{defn}

\begin{prop}
\label{prop: polar}
Let $E[\mathscr{T}]$  be a locally $L^0$-convex module, $D,D_i \subset E$ for $i\in I$, then:
\begin{enumerate}
\item $D^o$ is $L^0$-convex, weak-$*$ closed and relatively stable. 
%\item $D^{o}=\overline{\overline{co_{L^{0}}}^{cc}}\left\{ D\cup\left\{ 0\right\} \right\} $, the topological closure of the countable concatenation closure of the $L^0$-convex hull.
\item $0\in D^o$, $D\subset D^{oo}$. If $D_1\subset D_2$, then $D_2^o\subset D_1^o$.
\item For $\varepsilon \in L^0_{++}$, we have that $\left({\varepsilon} D \right)^o=\frac{1}{\varepsilon} D ^o$.
\item $\left(\bigcup_{i\in I}D_{i}\right)^{o}=\bigcap_{i\in I}D_{i}^{o}$
%\item $A^o \cap B^o \subset (A+B)^o$
\end{enumerate}
\end{prop}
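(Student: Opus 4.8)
The plan is to read off every assertion directly from the definition $A^o=\{Z\in E^*;\ |\langle X,Z\rangle|\le 1\ \text{for all }X\in A\}$, exploiting that the pairing $\langle\cdot,\cdot\rangle:E\times E^*\to L^0$ is $L^0$-bilinear and that $|\cdot|$ on $L^0$ obeys the triangle inequality for the almost sure order. Parts 2, 3 and 4 are then the classical arguments carried over verbatim, so I would dispose of them quickly; the only genuinely new content sits in part 1, specifically in the relative countable concatenation property.

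For part 1 I would verify the three properties in turn. For $L^0$-convexity, take $Z_1,Z_2\in D^o$ and $Y\in L^0$ with $0\le Y\le 1$; for each $X\in D$ the triangle inequality gives
\[
|\langle X, YZ_1+(1-Y)Z_2\rangle|\le Y|\langle X,Z_1\rangle|+(1-Y)|\langle X,Z_2\rangle|\le Y+(1-Y)=1,
\]
using $Y,1-Y\ge 0$, so $YZ_1+(1-Y)Z_2\in D^o$. For weak-$*$ closedness I would write $D^o=\bigcap_{X\in D}\{Z\in E^*;\ q_X(Z)\le 1\}$, where $q_X(Z):=|\langle X,Z\rangle|$ is precisely one of the $L^0$-seminorms inducing $\sigma(E^*,E)$; each level set is therefore $\sigma(E^*,E)$-closed by continuity of $q_X$, and so is their intersection.

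For the relative countable concatenation property I would take $\{Z_n\}_n\subset D^o$, a partition $\{A_n\}_n\in\Pi(\Omega,\mathcal{F})$ and any $Z\in E^*$ that is a countable concatenation, i.e. $1_{A_n}Z=1_{A_n}Z_n$ for all $n$. Using that a continuous $L^0$-linear functional satisfies $(1_{A_n}Z)(X)=1_{A_n}\langle X,Z\rangle$, for every $X\in D$ one obtains $1_{A_n}|\langle X,Z\rangle|=1_{A_n}|\langle X,Z_n\rangle|\le 1_{A_n}$, and summing against the partition yields $|\langle X,Z\rangle|=\sum_n 1_{A_n}|\langle X,Z\rangle|\le \sum_n 1_{A_n}=1$; hence $Z\in D^o$. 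This is the step I expect to be the main (though still mild) obstacle, as it is the genuinely randomized ingredient and hinges on the local property of elements of $E^*$.

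Finally, for parts 2–4: $0\in D^o$ is immediate since $\langle X,0\rangle=0$; $D\subset D^{oo}$ holds because every $X\in D$ satisfies $|\langle X,Z\rangle|\le 1$ for all $Z\in D^o$ by the very definition of $D^o$, where the bipolar is legitimately taken in $(E^*)^*=E$ thanks to Theorem~\ref{thm: dualWeak}; the antitonicity $D_1\subset D_2\Rightarrow D_2^o\subset D_1^o$ and the identity in part 4 are set-theoretic consequences of the quantifier ``for all $X$'' in the definition; and part 3 follows from $|\langle \varepsilon X,Z\rangle|=\varepsilon|\langle X,Z\rangle|$ for $\varepsilon\in L^0_{++}$, which shows that $Z\in(\varepsilon D)^o$ iff $\varepsilon Z\in D^o$ iff $Z\in\frac{1}{\varepsilon}D^o$.
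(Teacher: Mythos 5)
Your proposal is correct and follows essentially the same route as the paper: the one genuinely randomized step, the relative countable concatenation property of $D^o$, is established by exactly the computation the paper gives ($1_{A_n}\langle X,Z\rangle=1_{A_n}\langle X,Z_n\rangle\leq 1_{A_n}$, then summing over the partition), while the remaining assertions are the classical locally convex arguments, which the paper simply declares "analogue to the known proofs" and which you spell out correctly.
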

\begin{proof}
Let us see that $D^o$ is relatively stable.

Given $z$ a concatenation of $\{z_n\}\subset D^o$ along $\{A_k\}\in\Pi(\Omega)$, we have that 
\[
|\langle x,y\rangle|=\left(\underset{n\in\N}\sum 1_{A_n}\right) |\langle x,y\rangle| = \underset{n\in\N}\sum 1_{A_n} |\langle x,\sum 1_{A_n} x\rangle| =
\]
\[
\underset{n\in\N}\sum 1_{A_n} |\langle x,\sum 1_{A_n} z_n\rangle| =\underset{n\in\N}\sum 1_{A_n} |\langle x, z_n\rangle| \leq 1.
\]
The rest is analogue to the known proofs for locally convex spaces.
\end{proof}

\begin{thm}
\cite[Theorem 3.26]{key-12}
\label{thm: bipolar}
Let $E[\mathscr{T}]$  be a locally $L^0$-convex module which is stable with uniqueness, and suppose $D \subset E$, then:
\[
D^{oo}=\overline{\overline{co_{L^0}}^{cc}}(D\cup\{0\})
\]
\end{thm}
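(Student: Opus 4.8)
The plan is to prove the two inclusions of $D^{oo}=C$, where I abbreviate $C:=\overline{\overline{co_{L^0}(D\cup\{0\})}^{cc}}$, the $\sigma(E,E^*)$-closure of the countable concatenation hull of the $L^0$-convex hull of $D\cup\{0\}$. The inclusion $C\subseteq D^{oo}$ is the soft one: by Proposition \ref{prop: polar} the bipolar $D^{oo}=(D^o)^o$ is $L^0$-convex, $\sigma(E,E^*)$-closed and has the relative countable concatenation property, and by the same proposition $D\cup\{0\}\subseteq D^{oo}$. Being $L^0$-convex it contains $co_{L^0}(D\cup\{0\})$; being relatively countably concatenable it contains $\overline{co_{L^0}(D\cup\{0\})}^{cc}$; and being closed it contains the closure $C$. (Here I use that for an $L^0$-convex set with the relative countable concatenation property the original and the $\sigma(E,E^*)$-closure agree, so it is irrelevant which closure defines $C$.)

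For the reverse inclusion I would argue by contraposition: fix $X\notin C$ and produce $Z\in D^o$ and $A\in\mathcal{F}^+$ with $|\langle X,Z\rangle|>1$ on $A$, which forces $X\notin D^{oo}$. First I would arrange the separating hypothesis of Theorem \ref{HahnBanachII}. Using the countable concatenation property of $E$ to glue matching elements and the relative countable concatenation property of $C$ to keep the result inside $C$, split off the essentially largest set $A^*$ on which $X$ coincides with some element of $C$; since $X\notin C$ we have $\PP((A^*)^c)>0$, and on the complement $1_B X\cap 1_B C=\emptyset$ for every $B\in\mathcal{F}^+$ with $B\subseteq (A^*)^c$. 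Localizing to $(A^*)^c$, Theorem \ref{HahnBanachII} yields $\varepsilon\in L^0_{++}$ and a $\sigma(E,E^*)$-continuous $L^0$-linear $\mu$ with $\mu(Z)+\varepsilon\leq \mu(X)$ for all $Z\in C$. Since $0\in C$ this gives $\mu(X)\geq\varepsilon>0$, while $s:=\esssup_{Z\in C}\mu(Z)$ satisfies $0\leq s\leq \mu(X)-\varepsilon<\mu(X)$ and $\mu(Y)\leq s$ for every $Y\in D$.

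It then remains to renormalize $\mu$ into the polar. Choosing $\nu\in L^0_{++}$ with $\nu=s$ on $(s>0)$ and $\nu$ a positive constant on $(s=0)$, and replacing $\mu$ by $\mu/\nu$, I obtain a functional whose values on $D$ are dominated by $1$ and whose value at $X$ strictly exceeds $1$ on a set of positive measure; the absolute value in the definition of the polar is controlled using the balanced part of the hull (so that the $\pm$ estimates both hold), and Theorem \ref{thm: dualWeak} guarantees that the renormalized functional lies in $E^*$, hence in $D^o$. This yields the desired separator, so $X\notin D^{oo}$. The main obstacle is precisely this combination of the localization step with the scalar patching on $(s=0)$: unlike the classical Kelley--Namioka argument (cf. \cite{key-14}), the separating ``constant'' is here an element of $L^0$ that may vanish on part of $\Omega$, and it is exactly the countable concatenation property of $E$ assumed in the statement that lets one glue the local separations into a single global $Z\in D^o$. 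Once this gluing is in place, the remainder is the verbatim randomization of the classical bipolar theorem.
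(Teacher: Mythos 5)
A preliminary caveat: the paper does not prove this theorem at all --- it is imported verbatim from \cite[Theorem 3.26]{key-12} and used as a black box --- so there is no internal proof to compare yours against, and I can only assess your argument on its own terms. Your skeleton (the soft inclusion from Proposition \ref{prop: polar}; the hard inclusion by contraposition, separating $X\notin C$ via Theorem \ref{HahnBanachII} after a concatenation argument, then renormalizing the separating functional into $D^o$) is the standard randomization of the Kelley--Namioka bipolar argument, and the soft inclusion as you present it is fine.

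There is, however, a genuine gap at the renormalization step, and it is not one you can patch. To conclude $Z:=\mu/\nu\in D^o$ you need the two-sided bound $|\langle Y,Z\rangle|\le 1$ for all $Y\in D$, whereas the separation only yields the one-sided bound $\mu(Y)\le s$. You appeal to ``the balanced part of the hull'', but $C=\overline{\overline{co_{L^0}}^{cc}}(D\cup\{0\})$ is not $L^0$-balanced, so $-Y$ need not lie in $C$ and the lower estimate $-\mu(Y)\le s$ is simply unavailable. Indeed, with the polar as defined in this paper (via $|\langle X,Z\rangle|\le 1$) the identity as stated already fails for $D=\{1\}\subset E=L^0$: there $D^{o}$, and hence $D^{oo}$, equals the ball $\{X:|X|\le 1\}$, while the right-hand side is $\{Y:0\le Y\le 1\}$, which misses $-1$. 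So either the polar must be read one-sidedly or the hull on the right must be the absolutely $L^0$-convex one; your proof of the hard inclusion only goes through after one of these corrections, and no proof can close the gap for the statement as literally written. Two smaller points: Theorem \ref{HahnBanachII} requires $1_AX\cap 1_AC=\emptyset$ for \emph{every} $A\in\mathcal{F}^+$, so ``localizing to $(A^*)^c$'' must be implemented by replacing $X$ on $A^*$ with an element that is nowhere locally in $C$ (exactly as is done with $X_2$ in the proof of Theorem \ref{thm: main}); and choosing $\nu$ to be ``a positive constant on $(s=0)$'' does not guarantee $\mu(X)/\nu>1$ there unless that constant is taken below $\varepsilon$ --- the uniform choice $\nu:=s+\varepsilon/2$ avoids the case split altogether.
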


\end{appendix}

\end{document}